\documentclass[]{siamart0516}

% Language setting
% Replace `english' with e.g. `spanish' to change the document language
\usepackage[english]{babel}
%\usepackage[maxbibnames=99]{biblatex}
%\addbibresource{sample.bib}

% Set page size and margins
% Replace `letterpaper' with `a4paper' for UK/EU standard size
\usepackage[letterpaper,top=2cm,bottom=2cm,left=3cm,right=3cm,marginparwidth=1.75cm]{geometry}
\usepackage{csquotes}
% Useful packages
\usepackage{amsmath}
\usepackage{amssymb}
\usepackage{graphicx}
\usepackage{tabularx}
\usepackage{rotating}

\usepackage{tikz}
\usepackage{mathrsfs}
\usepackage{standalone}
\usepackage{caption}
\usepackage{subcaption}
\usetikzlibrary{arrows.meta}
\tikzset{%
  >={Latex[width=2mm,length=2mm]},
  % Specifications for style of nodes:
            base/.style = {rectangle, rounded corners, draw=black,
                           minimum width=0.1cm, minimum height=1cm,
                           text centered, font=\sffamily},
           input/.style = {base, fill=blue!30, minimum width=3cm},
         control/.style = {base, fill=green!30, minimum width=3cm},
           inter/.style = {base, fill=blue!30, minimum width=1cm},
          output/.style = {base, fill=red!30, minimum width=1cm},
          action/.style = {base, fill=orange!15, minimum width=2cm, shift=3cm, rotate=90},
         process/.style = {base, minimum width=2.5cm, fill=orange!15,
                           font=\ttfamily},
}
\newsiamremark{remark}{Remark}
\newcommand{\mDG}{V_h}

\title{Medical Image Registration using optimal control of a linear hyperbolic transport equation with a DG discretization}
\author{Bastian Zapf\thanks{Expert Analytics AS, Oslo, Norway; Department of Mathematics, University of Oslo, Oslo, Norway \mbox{(bastian@xal.no)}}
\and Johannes Haubner\thanks{Department of Numerical Analysis and Scientific Computing, Simula Research Laboratory, Oslo, Norway; Institute of Mathematics and Scientific Computing, University of Graz, Austria \mbox{(johannes.haubner@uni-graz.at) }}
\and Lukas Baumg\"artner\thanks{Institut of Mathematics, Humboldt University of Berlin, 10099 Berlin, Germany \mbox{(lukas.baumgaertner@hu-berlin.de)}} \and  Stephan Schmidt\thanks{Department of Mathematics, University of Trier, 54296 Trier, Germany \mbox{(s.schmidt@uni-trier.de)}}}

% \author[1]{Lukas Baumg\"artner}
% \author[2, 3, 4]{Johannes Haubner}
% \author[1]{Stephan Schmidt}
% \author[4, 5]{Bastian Zapf}
% \affil[1]{Institut f\"ur Mathematik, Humboldt University of Berlin, 10099 Berlin, Germany}
% \affil[2]{Department of Numerical Analysis and Scientific Computing, Simula Research Laboratory, Oslo, Norway}
% \affil[3]{Institute of Mathematics and Scientific Computing, University of Graz, Austria \mbox{(johannes.haubner@uni-graz.at) }}
% \affil[4]{TEST}
% \affil[5]{Department of Mathematics, University of Oslo, Oslo, Norway}

% Optional PDF information
\ifpdf
\hypersetup{
  pdftitle={Medical Image Registration using optimal control of a linear hyperbolic transport equation with a DG discretization},
  pdfauthor={B. Zapf, J. Haubner, L. Baumg\"artner, and S. Schmidt}
}
\fi

\begin{document}
\maketitle
\begin{keywords}
  Image Registration, Image Transport, Optimization, Hyperbolic PDEs, Discontinuous Galerkin, Meshing
\end{keywords}

\begin{AMS}
%Partial Differential Equations:
%35-04, %Software, source code, etc. for problems pertaining to partial differential equations
%35A01, %Existence problems for PDEs: global existence, local existence, non-existence
35D30, %Weak solutions to PDEs
%35F05, %Linear first-order PDEs
35L02, %First-order hyperbolic equations
35Q49, %Transport equations
%35Q92, %PDEs in connection with biology, chemistry and other natural sciences
35Q93, %PDEs in connection with control and optimization
35R30, %Inverse problems for PDEs
%
%
%Calculus of variations and optimal control; optimization
%49-04, %Software, source code, etc. for problems pertaining to calculus of variations and optimal control
49M05, %Numerical methods based on necessary conditions
49M41, %PDE constrained optimization (numerical aspects)
49N45, %Inverse problems in optimal control
%49Q22, %Optimal transportation
%
%
%Numerical Analysis:
%65-04, %Software, source code, etc. for problems pertaining to numerical analysis
65D18, %Numerical aspects of computer graphics, image analysis, and computational geometry
65K10, %Numerical optimization and variational techniques
65M32, %Numerical methods for inverse problems for initial value and initial-boundary value problems involving PDEs
65M50, %Mesh generation, refinement, and adaptive methods for the numerical solution of initial value and initial-boundary value problems involving PDEs
%65N21, %Numerical methods for inverse problems for boundary value problems involving PDEs
%65N50, %Mesh generation, refinement, and adaptive methods for boundary value problems involving PDEs
%
%
%Artificial Intelligence:
68T99 %None of the above, but in this section
\end{AMS}

\begin{abstract}
Patient specific brain mesh generation from MRI can be a time consuming task and require manual corrections, e.g., for meshing the ventricular system or defining subdomains. To address this issue, we consider an image registration approach. 
The idea is to use the registration of an input magnetic resonance image (MRI) to a respective target in order to obtain a new mesh from a template mesh. 
To obtain the transformation, we solve an optimization problem that is constrained by a linear hyperbolic transport equation. We use a higher-order discontinuous Galerkin finite element method for discretization and motivate the numerical upwind scheme and its limitations from the continuous weak space--time formulation of the transport equation. We present a numerical implementation that builds on the finite element packages FEniCS and dolfin-adjoint. 
To demonstrate the efficacy of the proposed approach, numerical results for the registration of an input to a target MRI of two distinct individuals are presented. 
Moreover, it is shown that the registration transforms a manually crafted input mesh into a new mesh for the target subject whilst preserving mesh quality.
Challenges of the algorithm are discussed. 
%It is exemplarily applied to register MR images from two distinct individuals. 
%It is shown that the registration transformation can also be used to transform a volume mesh for the ventricular system of the first individual to match the second individual.
%Our numerical implementation builds on established and is openly available.
% In order to obtain differentiability, we use a smoothed version of the upwind term. The optimization procedure is motivated by con
\end{abstract}

\tableofcontents

\section{Introduction}
The availability of high quality meshes is a crucial component in medical imaging analysis. For instance, open source high performance solver frameworks for partial differential equations (PDEs) have in recent years been applied to refine the understanding of the fundamental physiological processes governing molecular transport in the human brain \cite{kelley2022glymphatic}. Among other examples \cite{koundal2020optimal, boj2023, holter2017interstitial, vardakis2021using, valnes2020apparent} have combined PDE-constrained optimization and medical resonance images (MRI) to study molecular transport on the macroscopic scale in the human brain. One major challenge in this modelling workflow is the generation of patient-specific meshes.
To address this issue, \cite{mardal2022mathematical} recently published a recipe on how to obtain a finite element mesh of a human brain based on a MR image. 
The accompanying software \cite{svmtk} uses brain surface files generated by FreeSurfer \cite{fischl2012freesurfer} and creates volume meshes using CGAL \cite{cgal:eb-22b}.
These semi-automatically generated meshes are useful for modeling of processes in the brain parenchyma as in \cite{corti2022numerical, schreiner2022simulating, boon2022parameter}. However, in some applications like modelling of fluid transport in the cerebral aqueduct and the ventricular system \cite{hornkjol2022csf, causemann2022human} manual mesh processing is required.
This is a time-consuming task, which requires domain expertise and hence is hardly reproducible. 
In this work, we aim to simplify this workflow for future studies on new patients by utilizing manually crafted meshes to automatically generate new meshes from images.

Assume we are given two brain MRI scans from two different patients or the same patient at different times. 
Moreover, assume that we have obtained -- in a time consuming process that required manual corrections -- a  mesh for some brain regions from the first MRI. 
The question that we address is how to obtain a mesh that fits to the second MRI using the existing mesh as input.

\begin{figure}[t]
\centering
\scalebox{0.8}{
\begin{tikzpicture}[font=\sf]
%draw lines for arrows
\draw[-{Latex[length=0.5cm, width=0.5cm]}, line width = 0.2cm, gray!60] (0, 3.7) -- (0, 2.3);
\draw[-{Latex[length=0.5cm, width=0.5cm]}, line width = 0.2cm, gray!30] (8, 3.7) -- (8, 2.3);
\draw[-{Latex[length=0.5cm, width=0.5cm]}, line width = 0.2cm, gray!90] (3, 6) -- (5, 6);
%textboxes
\node[align=center] at (4, 7) { FreeSurfer \& \\ SVMTK \& \\ manual work};
\node[align=center] at (-1.5, 3) { image \\ registration \\ (Section 3)};
\node[align=center] at (9.5, 3) { mesh \\ transformation \\ (Section 4)};
% include figures
\node[inner sep=0pt] at (0.,0)    {\includegraphics[width=5cm, ]{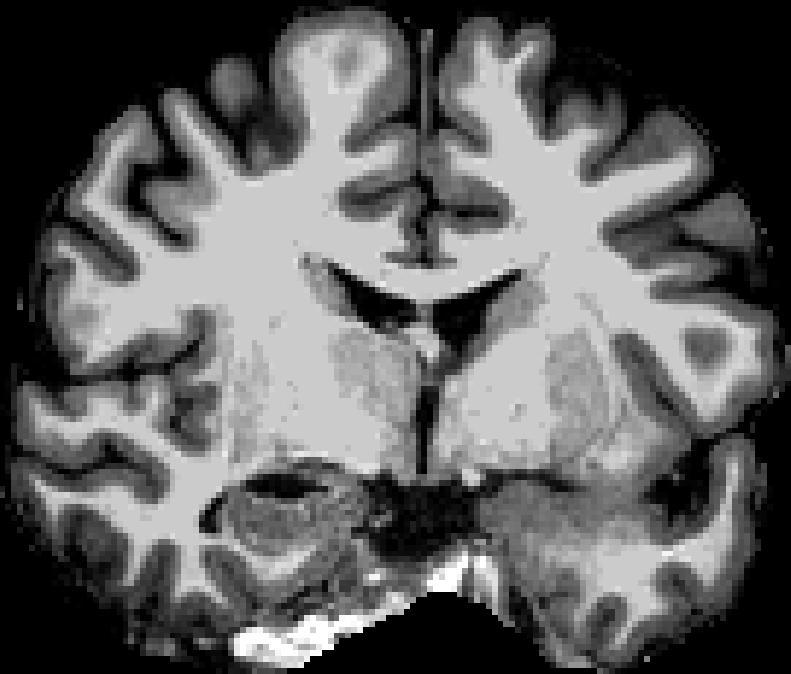}};
\node[inner sep=0pt] at (0.,6)    {\includegraphics[width=5cm, trim={0.25cm 0.25cm 0.25cm 0.25cm}, clip]{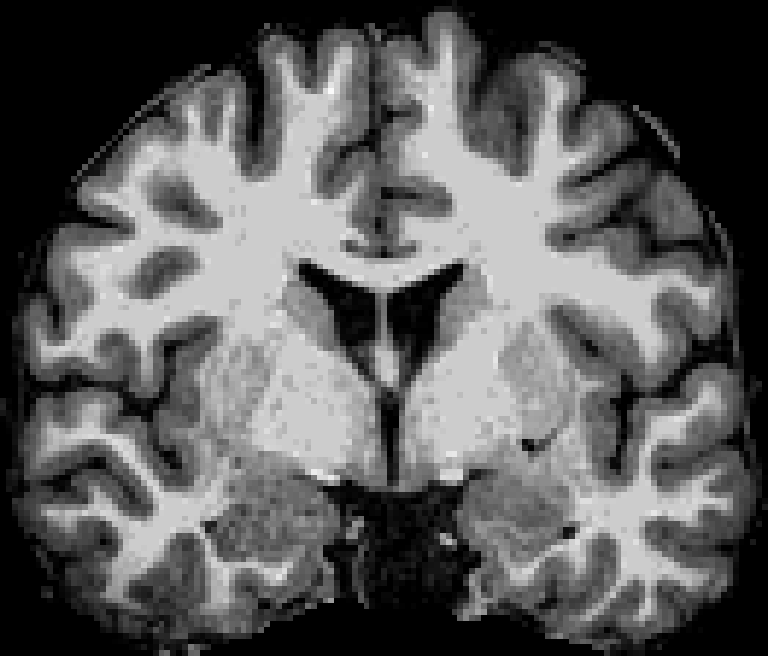}};
\node[inner sep=0pt] at (8.0, 0.1)    {\includegraphics[width=7cm, trim={15cm 0cm 15cm 0cm}, clip]{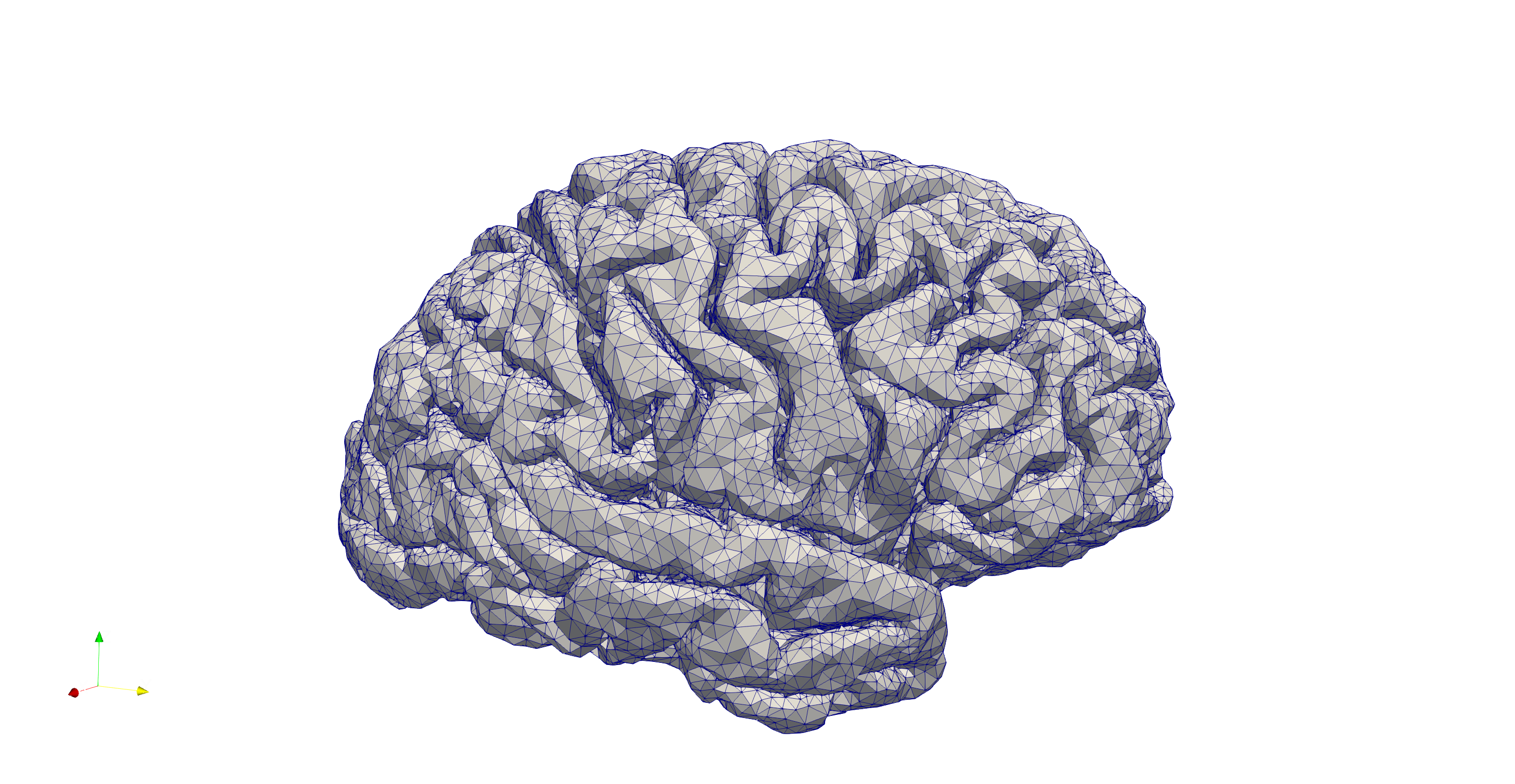}};
\node[inner sep=0pt] at (8.0,6.1)    {\includegraphics[width=7cm, trim={15cm 0cm 15cm 0cm}, clip]{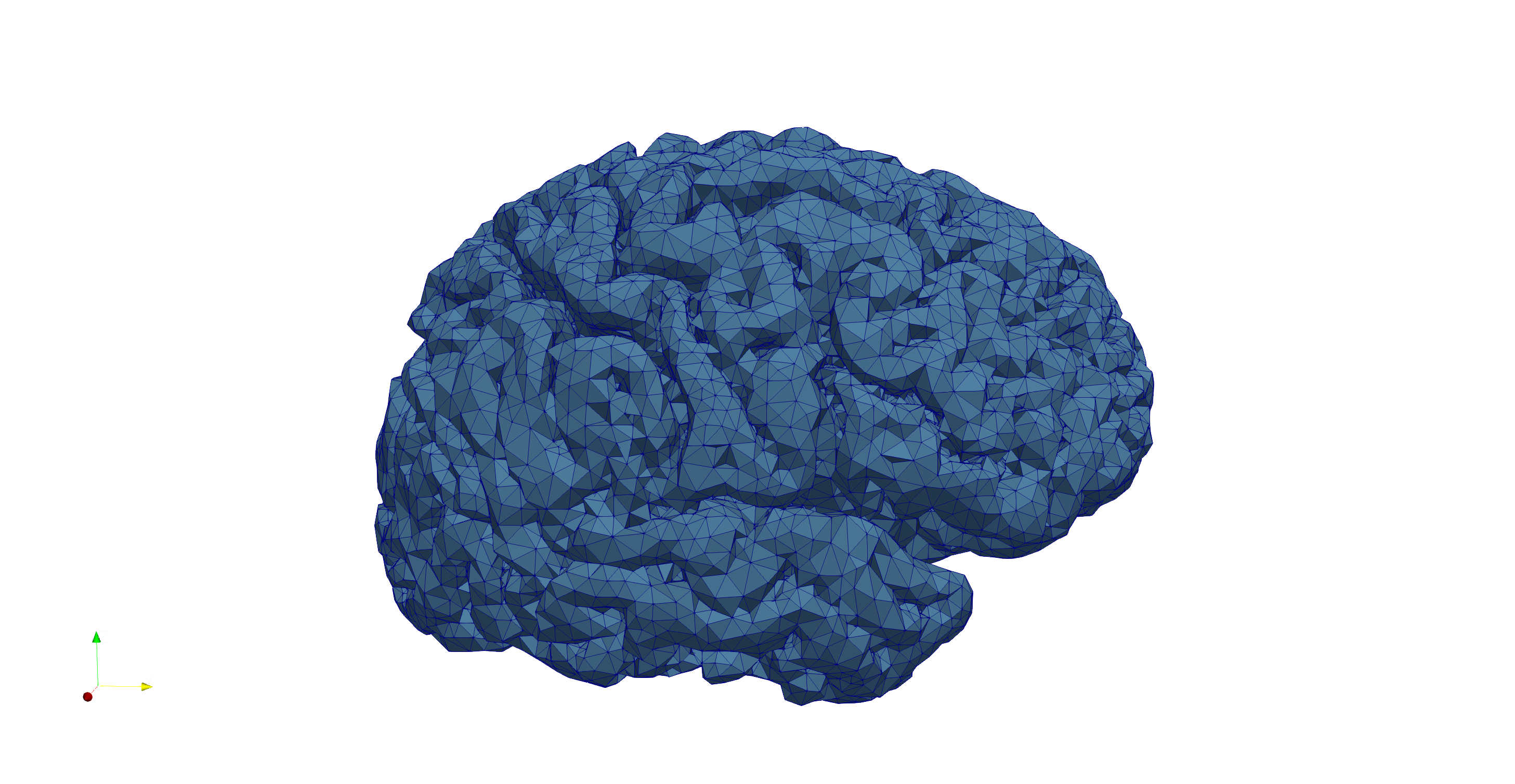}};
%textboxes
\node[align=left, color=white] at (-2.05, 1.9) {\textbf{Ernie}};
\node[align=left, color=white] at (-2.05, 7.9) {\textbf{Abby}};
\end{tikzpicture}
}
\caption{
Sketch of the proposed mesh deformation pipeline to generate a mesh for target \enquote{Ernie} from an existing mesh for \enquote{Abby}.
After registering the MRI from \enquote{Abby} to \enquote{Ernie}, the transformation is used to transform the mesh of \enquote{Abby} to obtain a mesh for \enquote{Ernie}.
\label{fig:idea}
}
\end{figure}

A similar question has recently been addressed in \cite{jozsa2022mri} using an affine transformation to a reference mesh. In comparison, our approach is based on nonlinear image registration and related to \cite{khan2008freesurfer}, where FreeSurfer registration was combined with large deformation diffeomorphic metric mappings \cite{beg2005computing}.
In detail, we first use an affine registration followed by a nonlinear registration via (a sequence of) deformation velocity fields.
The resulting transformation is then applied to transform the input mesh.
The proposed methodology is demonstrated using actual human MRI data. The subjects are called \enquote{Abby} and \enquote{Ernie} and are publicly available at \cite{zenodoMRIdata}.
This idea is sketched in Fig. \ref{fig:idea}. Defining the transformation via velocity fields and at the same time ensuring that the mesh quality is preserved, implies that the two brain meshes have the same topology.
Our approach to generate meshes from a new image hence implicitly requires the two brains to have the same topology. This is a common assumption in brain surface meshing \cite{acosta2012cortical, sun2019topological, segonne2007geometrically}.
We assume this to be valid in the given subject data. 
The requirement can otherwise be achieved by considering certain subregions individually.

Nonlinear registration, which is an essential part of this procedure, is a big field and not all relevant publications can be cited in this work. For a broader overview and introduction, we refer to, e.g., \cite{brown1992survey, modersitzki2009fair, mang2018pde}. Medical image registrations have been reviewed in, e.g., \cite{sotiras2013deformable, oliveira2014medical}. An experimental evaluation of different nonlinear registration algorithms can be found in \cite{klein2009evaluation}. 
In FreeSurfer \cite{fischl2012freesurfer} nonlinear intensity based registration using optical flow is a subroutine in one of the available registration algorithms \cite{postelnicu2008combined}.

We consider the image registration problem as an optimal control problem, which is governed by an optical flow, modelled by a hyperbolic transport equation as introduced in \cite{borzi2003optimal}. Analysis of the transport equation in spaces with spatial $BV$ regularity and optimal control formulations can, e.g., be found in \cite{diperna1989ordinary, ambrosio2004transport, de2008ordinary, crippa2008flow, chen2011image, jarde2018analysis, jarde2019existence}.
A numerical implementation can, e.g., be found in \cite{mang2018pde, mang2019claire, mang2015inexact, brunn2021fast}.
In recent years, image registration has been addressed via deep neural networks \cite{fu2020deep, haskins2020deep, chen2022recent}. In order to contribute to closing the gap between classical approaches and neural networks based approaches, we consider a discretization of the optimal control formulation, which can be translated into a deep neural network architecture using convolutional neural networks, CNN/MeshGraphNet and ResNet architectures. This is inspired by \cite{ruthotto2020deep}, where neural network architectures are motivated by using an explicit time-stepping for time-dependent partial differential equations. Instead of only considering the partial differential equations, we design the algorithm in such a way that the whole control-to-state map is translatable into a neural network architecture. To do so, we base our algorithm on a discretization with Discontinuous Galerkin (DG) finite elements. The sketch of the implemented image registration algorithm is presented in Fig. \ref{fig:algorithm}. Theoretical considerations show the limitations of our implementation. In this work, we implement the algorithm using classical discontinuous finite element techniques in order to do a proof-of-concept study.

We present the pipeline of our work flow. We start by preprocessing the MR images (section \ref{sec:preprocessing}) to normalize the voxel intensities and doing a first affine registration. In section \ref{sec:imageregistration}, we present our approach to image registration. We give the problem formulation, and discuss and justify our modeling choices (section \ref{subsec:justification}). For the discretization, we use a standard approach to solve the linear transport equation using DG finite elements using smoothed upwinding (section \ref{subsec:discretization}). We motivate the numerical upwind scheme from a space--time perspective (appendix \ref{sec::appendix::motivation}). Moreover, we introduce our approach to solve the resulting optimization problem (section \ref{subsec:numerics-optimization}) and present numerical results (section \ref{subsubsec:imagereg}). In section \ref{sec:meshtrafo}, we demonstrate the mesh transformation pipeline.

\section{Pre--processing}
\label{sec:preprocessing}

\begin{figure}[t]
\includegraphics[width=1.0\textwidth]{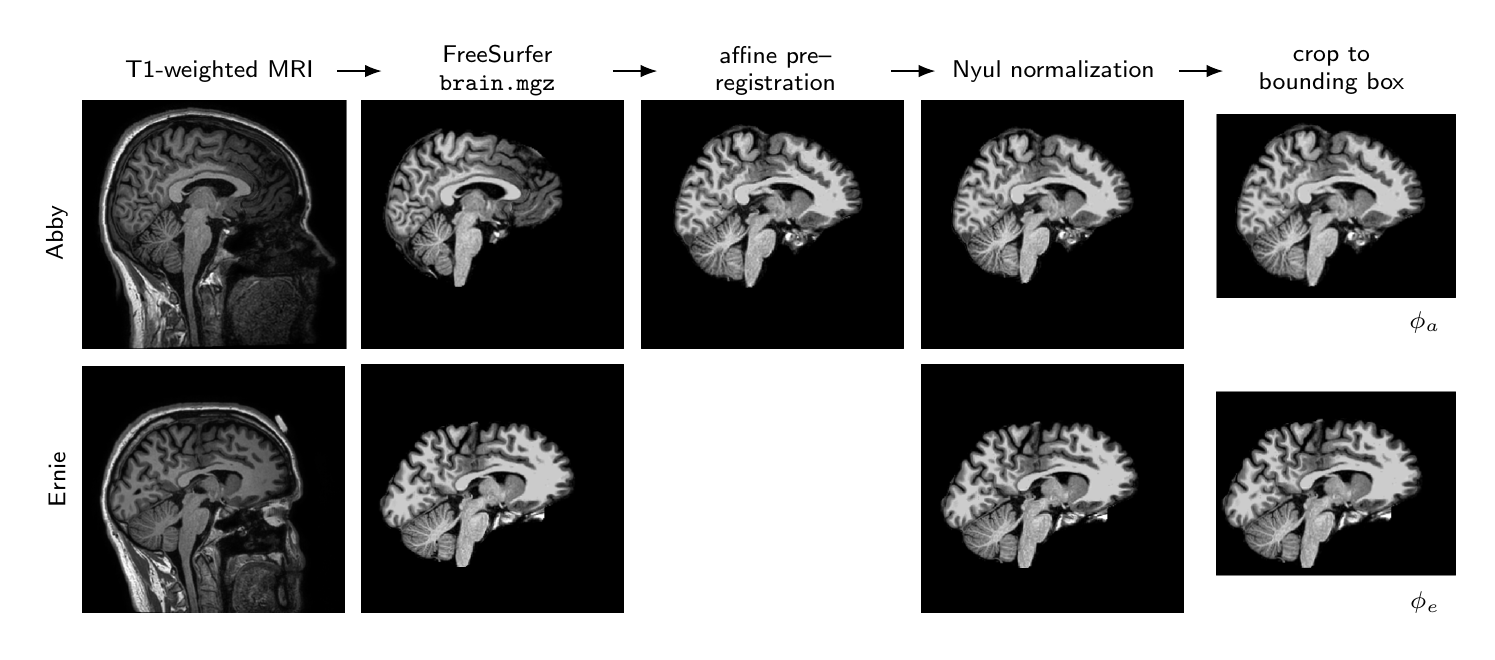}
\caption{Sketch of pre--processing pipeline. Top: Abby; Bottom: Ernie; From left to right: T1-weighted MRI, output of FreeSurfer \texttt{recon--all} (\texttt{brain.mgz}), affine pre--registration (only changes Abby image), Nyul-normalization and crop to bounding box.}
\label{fig::preprocessing}
\end{figure}

We first apply several pre--processing steps to the MR images, see Figure \ref{fig::preprocessing}.
We use the output images \texttt{brain.mgz} displaying only the brain with the cranium and other soft tissue of the head removed for both subjects, which is achieved via the \texttt{recon--all} command of FreeSurfer and already includes a first normalization step. This is beneficial since it allows us to exclude slices on the boundaries of the MRI that do not contain brain tissue, significantly reducing the computational cost of the algorithm.
Coronal slices of these images for both subjects are shown in Fig. \ref{fig:idea}.

Next, a pre--registration step using an affine transformation $\mathcal{A}$ (12 degrees of freedom) \cite{reuter2010highly} is performed.
Then the MR image intensities are normalized as proposed in \cite{nyul1999standardizing, nyul2000new} and implemented in \cite{reinhold2019evaluating}. While many normalization schemes have been proposed in the literature, e.g., \cite{friston1995spatial, sun2015histogram, christensen2003normalization}, the results in the study \cite{shah2011evaluating} suggest efficacy of the approach \cite{nyul1999standardizing, nyul2000new} and we hence resort to this normalization.

Finally, to reduce the computational cost of the nonlinear registration, we cut the images to the rectangular bounding box containing both images, but pad the image with 2 voxels containing zeros on every side (due to Dirichlet boundary condition $v=0$ for the deformation velocity).
This reduces the number of voxels from $256^3 \approx 1.7\times 10^7$ in the original images
to $n_1\times n_2 \times n_3 = 147 \times 154  \times 192 \approx 4.4\times 10^6$ whilst keeping the full brain in the image.

The cropped image intensities are then represented as a discontinuous finite element function using first order Lagrange elements defined on a Box Mesh (6 tetrahedra per voxel) with $6 \times n_1\times n_2 \times n_3$ cells and length scales $n_1\times n_2 \times n_3$.

\section{Image registration}
\label{sec:imageregistration}

We are concerned with finding a diffeomorphic mapping that transforms some input function $\phi_a$ to a target function $\phi_e$, see Fig. \ref{fig:algorithm}.
Let $\Omega \subset \mathbb R^d$, $d \in \lbrace 2, 3 \rbrace$, be a bounded Lipschitz domain, and $\phi_a,\, \phi_e,\, \phi(\cdot,T) : \Omega \to \mathbb R$ be scalar-valued. 

In this manuscript, we are concerned with the registration of MR images. Hence, in our setting, $d=3$ and $\Omega=[0, n_1]\times[0, n_2]\times[0, n_3]$, where $n_i$ denote the numbers of voxels along image axis $i\in \lbrace 1,2,3 \rbrace$.
$\phi_a$ and $\phi_e$ correspond to intensity values of the preprocessed input and target MRI, respectively.
We consider the PDE constrained optimization problem
\begin{align}
    \min_{v\in V_{ad} } \frac12 \mathcal{J}_d(\phi(\cdot,T), \phi_{e}) + \gamma \mathcal{R}(v) %\Vert \phi(\cdot,T) - \phi_{e} \Vert_A  + \mathcal{R}(v) 
    \label{eq::reducedfunctional}
    & \\
    \begin{split}
     \text{s.t.} \ \partial_t \phi(x, t) +  v(x) \cdot \nabla \phi(x, t)  &= 0  \quad \hspace{8pt} \text{on } \Omega \times (0, T) =: Q_T, \\
     \phi(\cdot,0) &= \phi_{a} \quad \text{ in } \Omega,
     \label{eq::pde}
     \end{split}
 \end{align}
 where $V_{ad} := \lbrace v \in V ~: ~v \cdot n = 0 \text{ on } \Sigma_T := \partial \Omega \times (0,T) \rbrace$ denotes the deformation velocity field, $V$ denotes an appropriate Banach space, and $\gamma>0$ a regularization parameter.
The term $\mathcal{R}(v)$ denotes a regularization term, and the choice of regularization for the numerical results presented in manuscript are detailed in Section \ref{subsec:numerics-optimization}.
We define the data mismatch as
 \begin{align}
     \mathcal{J}_d(\phi(\cdot,T), \phi_{e}) = \int_{\Omega} f(\phi(x, T)-\phi_{e}(x)) \text{d}x
     \label{eq:data-discrepancy}
 \end{align}
 using the Huber function \cite{huber1992robust} 
 \begin{align}
    f(x) = \begin{cases} \frac{x^2}{2}  \quad &\text{if } |x| \leq \delta,  \\
    \delta \left(|x| - \frac{\delta}{2} \right) &\text{otherwise,}
    \end{cases} 
    \label{eq:huber}
\end{align}
with $\delta > 0$.

\subsection{Relation to other approaches and motivation for our choice} 
\label{subsec:justification}
A general form of the image registration problem is given as 
\begin{align}
    \min_{\tau \in \mathcal T_{ad}} \frac{1}2 \mathcal J_d( \phi_a \circ \tau, \phi_e) + \gamma \mathcal R_{\tau}(\tau).
    \label{general_opt_prob}
\end{align}
\subsubsection{Choice of the objective function}
For the choice of $f$, the square function has been considered in the image registration literature, e.g. \cite{mang2018pde}. 
To account for outliers in the image intensities, other functions are also used in brain image registration. Examples include the Huber function \cite{huber1992robust} and Tukey's biweight function \cite{tukey1960survey} as used in, e.g.,  FreeSurfer \cite{reuter2010highly}, which both limit the influence of outliers in the loss and its gradient. Similar performance of these two choices in image registration has been observed in \cite{arya2007image}. 
\begin{remark}
    Since we work with noisy images, we choose to use the Huber function.
\end{remark}

\subsubsection{Choice of the transformation} 
\label{sec::3.1.2}
There are different approaches to parametrize the transformation. Overview over techniques in image registration can also be found in \cite{zhang2018}.

\textbf{Perturbation of Identity. }
One possibility to choose the transformation is a perturbation of identity ansatz
$$\tau = \mathrm{id}+ d_\tau, $$
where $d_\tau: \Omega \to \mathbb R^d$ denotes a deformation field.
In order to ensure non-degeneracy of the transformed domain, one needs to introduce constraints. For volume preservation, \cite{haber2004numerical} worked with the constraint $\mathrm{det}(\nabla \tau) = 1$. \cite{karaccali2004estimating, zhang2018} modified this constraint to ensure $\mathrm{det}(\nabla \tau) > 0$, which, in combination with sufficient regularity, ensures diffeomorphic transformations, see \cite[Lemma 2.3]{HSU21}. A result, which works only locally around the identity and does not use the determinant constraint can already be found in \cite[Prop. 8.6]{younes2010shapes}. 

\textbf{Velocity or speed method. } Another possibility is the definition of the transformation $\tau$ via 
\begin{align} \tau_{v} := x_{v} (1), 
\label{lddmm_tau}
\end{align}
where $x_{v}$ is the solution of 
\begin{align*}
    &\partial_t x_{v} (t) = v (x_{v}(t), t), \\
    &x_{v} (0) = x.
\end{align*}
Solution theory for this setting is available for $v \in L^1((0,1), \mathcal C^{0,1}(\overline{\mathbb R}^d, \mathbb R^d))$ \cite[Sec. 4.2.1 and Example 2.1, p. 125 for the definition of $\mathcal C^{0,1}(\overline{\mathbb R}^d, \mathbb R^d)$]{delfour2011shapes}, see \cite{zolesio1979identification, zolesio1981material}. In \cite[Theorem 8.7]{younes2010shapes}, it is shown that for $v \in L^1((0,1), \mathcal C_0^1 (\overline{\Omega}, \mathbb R^d))$ the transformation $\tau_{v}$ is a diffeomorphism. In image registration, this choice for the transformation is known as LDDMM (Large deformation diffeomorphic metric mapping) algorithm \cite{trouve1998diffeomorphisms, dupuis1998variational, beg2005computing}. Theoretical guarantees for $\tau_{v}$ to be a diffeomorphism can be ensured without additional constraints and be found in, e.g., \cite[Theorem 8.7]{younes2010shapes}. The following lemma extends \cite[Theorem 8.7]{younes2010shapes} or \cite{chen2011image}, and is in the spirit of \cite[Prop. 3.4]{Jorgen} or \cite[Lemma 2.2]{HSU21}. The lemma is proven in \cite[Theorem 5.1.8]{jarde2018analysis}.

\begin{lemma}[see {\cite[Theorem 5.1.8]{jarde2018analysis}}] 
Let $\Omega \subset \mathbb R^d$, $d \geq 1$, be a bounded Lipschitz domain, $T > 0$ and $v \in L^1 ((0,T), W^{1, \infty}(\Omega)^d)$ with $v \vert_{\partial \Omega} = 0$. Moreover, for $t_0 \in (0,T)$, let $X_{t_0} $ be defined as the solution of
\begin{align*}
\begin{split}
    \partial_t X_{t_0}(x, t) = v( X_{t_0}(x,t), t) \quad & \text{on } \Omega \times (0,T), \\
    X_{t_0}(x, {t_0}) = x \quad & \text{in }\Omega.
\end{split}
\end{align*}
Then, the transformation $\tau: \Omega \to \Omega$ defined by 
$\tau(x) = \tau_{{t_0}, s}(x) := X_{t_0}(x, s)$ is bi-Lipschitz continuous for all $s \in [0,T]$ and its inverse $\tau_{{t_0}, s}^{-1}(x) = X_s(x, t_0)$ satisfies
\begin{align}
\begin{split}
    \partial_t X_{s}(x, t) = v( X_{s}(x,t), t) \quad & \text{on } \Omega \times (0,T), \\
    X_{s}(x, {s}) = x \quad & \text{in }\Omega.
\end{split}
\label{inverse}
\end{align}
Moreover, $X_{t_0}(x,s) = \eta(x,t_0,s)$, where $\eta$ satisfies the transport equation 
\begin{align*}
    \begin{split}
        \partial_t \eta (x, t, s) + (v(x, t) \cdot \nabla) \eta(x,t, s) = 0 \quad & \text{on }\Omega \times (0,T), \\
        \eta(x, s, s) = x \quad & \text{in } \Omega.
    \end{split}
    %\label{eq::def_trafo}
\end{align*}
\label{lemma:2-1}
\end{lemma}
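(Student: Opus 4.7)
The plan is to build the flow via Cauchy–Lipschitz theory in its Carathéodory form, since the vector field is only $L^1$ in time but uniformly Lipschitz in space. Specifically, because $v(\cdot,t) \in W^{1,\infty}(\Omega)^d$ with $v|_{\partial\Omega}=0$, we may extend $v$ by zero to all of $\mathbb{R}^d$ without loss of the spatial Lipschitz estimate $|v(x,t)-v(y,t)| \le \|\nabla v(\cdot,t)\|_{L^\infty}|x-y|$, where the Lipschitz constant is $L^1$ in $t$. Standard Carathéodory theory then yields a unique absolutely continuous solution $X_{t_0}(x,\cdot)$ of the ODE on $[0,T]$, and the zero boundary condition confines the trajectory to $\overline{\Omega}$, so $\tau_{t_0,s}$ maps $\Omega$ into $\Omega$.

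Next I would obtain the bi-Lipschitz estimate. Subtracting the integral forms of the ODE for two initial data $x,y$ and applying Grönwall's lemma gives
\begin{equation*}
    |X_{t_0}(x,s)-X_{t_0}(y,s)| \;\le\; |x-y|\,\exp\!\left(\int_{\min(t_0,s)}^{\max(t_0,s)} \|\nabla v(\cdot,\tau)\|_{L^\infty}\,\mathrm{d}\tau\right),
\end{equation*}
which, together with the analogous estimate after swapping the roles of the time endpoints, shows that $\tau_{t_0,s}$ is Lipschitz and so is its inverse, once the inverse is identified.

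For the inverse, the idea is the semigroup (or two-parameter flow) property: for fixed $x$, both maps $r \mapsto X_{t_0}(X_s(x,t_0), r)$ and $r \mapsto X_s(x,r)$ solve the same ODE with the same value $X_s(x,t_0)$ at time $r=t_0$, so by uniqueness they coincide for all $r$; evaluating at $r=s$ yields $X_{t_0}(X_s(x,t_0),s)=x$. Applied in both directions, this identifies $\tau_{t_0,s}^{-1} = X_s(\cdot,t_0)$, which clearly satisfies \eqref{inverse} by definition.

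Finally, for the transport equation, set $\eta(x,t,s) := X_t(x,s)$, so that $\eta(x,s,s)=x$ and $\eta(x,t_0,s)=X_{t_0}(x,s)$. The key observation is a compatibility identity for the flow: applying the semigroup property as above shows that $X_{t_0}(x_0,s) = \eta(X_{t_0}(x_0,t),t,s)$ for every intermediate $t$. Differentiating in $t$ (the left side being independent of $t$), applying the chain rule, and using $\partial_t X_{t_0}(x_0,t) = v(X_{t_0}(x_0,t),t)$, then evaluating at $t=t_0$ so that $X_{t_0}(x_0,t_0)=x_0$, yields $\partial_t \eta(x_0,t_0,s) + v(x_0,t_0)\cdot\nabla_x \eta(x_0,t_0,s) = 0$. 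Since $(x_0,t_0)$ is arbitrary, the transport equation holds. The main obstacle is the low temporal regularity: the chain-rule step and the pointwise-in-$t$ evaluation must be justified almost everywhere in $t$, which requires Lebesgue-point arguments for the $L^1$-integrand $\|\nabla v(\cdot,t)\|_{L^\infty}$ and careful use of the DiPerna–Lions-type theory for transport with $W^{1,\infty}$ coefficients; this is where I would lean on the framework developed in \cite{jarde2018analysis,ambrosio2004transport,delfour2011shapes}.
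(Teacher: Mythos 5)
Your route is genuinely different from the paper's, because the paper does not give a self-contained argument at all: its proof consists of citations, invoking parts (i), (ii), (iv) and (v) of \cite[Theorem 5.1.8]{jarde2018analysis} together with \cite[Lemma 2.1]{keuthendiss} for, respectively, bijectivity and Lipschitz continuity of $\tau_{t_0,s}$, the ODE characterization \eqref{inverse} of the inverse, and the transport-equation characterization via $\eta$. You instead reconstruct the content: zero-extension of $v$ to $\mathbb R^d$ (legitimate here because $W^{1,\infty}(\Omega)$ coincides with the Lipschitz functions on $\overline\Omega$ for a bounded Lipschitz domain and $v\vert_{\partial\Omega}=0$, so the segment-crossing argument preserves the Lipschitz bound), Carath\'eodory existence and uniqueness, Gr\"onwall in both time directions for the bi-Lipschitz estimate, and the two-parameter flow property for identifying $\tau_{t_0,s}^{-1}=X_s(\cdot,t_0)$. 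These parts are correct and complete; what your approach buys is transparency, what the paper's buys is brevity. One small repair: you need trajectories starting at $x\in\Omega$ to remain in $\Omega$, not merely in $\overline\Omega$. This follows from backward uniqueness: if $X_{t_0}(x,t^*)\in\partial\Omega$, then the constant curve through that boundary point is a solution (since $v$ vanishes there), so $X_{t_0}(x,\cdot)$ is constant and $x\in\partial\Omega$, a contradiction.

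The one genuine gap is the final step. Your derivation of $\partial_t\eta+(v\cdot\nabla)\eta=0$ differentiates the identity $X_{t_0}(x_0,s)=\eta(X_{t_0}(x_0,t),t,s)$ in $t$ and applies the chain rule. But $\eta(\cdot,\cdot,s)$ is only Lipschitz in $x$ and absolutely continuous in $t$, hence differentiable only almost everywhere in $(x,t)$ by Rademacher's theorem, and the characteristic curve $t\mapsto(X_{t_0}(x_0,t),t)$ is a null set in $(x,t)$-space: almost-everywhere differentiability gives no license to apply the chain rule along it. Likewise, ``evaluating at $t=t_0$'' a relation that holds only for a.e.\ $t$ is not a pointwise-justified operation, and you never fix the sense (classical, a.e., or distributional) in which the transport equation is asserted. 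The standard repairs are either to mollify $v$, obtain the PDE classically for the regularized flows, and pass to the limit using Gr\"onwall stability, or to verify the weak formulation directly via the change of variables $y=X_{t_0}(x,t)$, exploiting the constancy along characteristics that you already established without any differentiation. As written, you defer precisely this point to \cite{jarde2018analysis}, which is the same move the paper makes by citing Theorem 5.1.8 (v) there; so on this one claim your proposal is no more self-contained than the paper, while for the remaining claims it is a valid constructive alternative.
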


\begin{proof}
Due to \cite[Theorem 5.1.8 (i), (ii)]{jarde2018analysis} and \cite[Lemma 2.1]{keuthendiss}, $\tau_{t_0, t}$ is bijective and Lipschitz continuous. Due to \cite[Theorem 5.1.8 (iv)]{jarde2018analysis}, the inverse $\tau^{-1}_{t_0, t}$ is given as the solution of \eqref{inverse} and therefore also Lipschitz continuous.
The latter follows from \cite[Theorem 5.1.8 (v)]{jarde2018analysis}.
\end{proof}

\begin{remark}
Though mainly being developed independently, image registration and transformation based shape optimization are based on similar techniques. In both applications, the goal is to find transformations, which ensure that the transformed meshes or images do not degenerate. In order to ensure this, several techniques have been developed and applied in the mathematical imaging as well as the shape optimization community. 
In contrast to image registration, in shape optimization one is only interested in the shape of the domain, which is uniquely determined by its boundary. Hence, in order to have a one-to-one correspondence between shapes and transformations, in the method of mappings, see e.g. \cite{murat1975etude, brandenburg2009continuous, fischer2017frechet, keuthendiss}, the deformations are parameterized by scalar valued functions on the design boundary. In shape optimization, one is typically interested in transformations that are bi-Lipschitz, i.e., bijective and Lipschitz continuous with Lipschitz continuous inverse. If, e.g., a kink should appear after the optimization, a transformation is needed that is less regular than a $\mathcal C^1$-diffeomorphism. 
\end{remark}

\begin{remark}
    In order to avoid additional constraints in the formulation of the optimization problem, we parametrize the transformations via velocity fields that are Lipschitz continuous in space. Moreover, for the sake of simplicity and reducing computational complexity, we work with a time-independent velocity field.
\end{remark}

\subsubsection{Modeling of the optimization problem}
\label{subsec:modeling}

Different approaches to solve \eqref{general_opt_prob} with transformations defined via \eqref{lddmm_tau} have been considered. 
These can be classified in Lagrangian, Semi-\-La\-gran\-gi\-an and Eulerian methods. For a detailed discussion on that topic we refer to \cite{mang2017lagrangian}. 

Eulerian methods have the advantage that differentiating the input function is not required \cite{borzi2003optimal}. However, the memory requirements are high since, in the discretized setting, $\phi(\cdot,t_k)$ has to be stored for every time-step $t_k$ in order to compute the derivative of $\mathcal J_d$ with respect to $v$. Moreover, explicit discretization schemes need to respect a CFL condition in order to be stable.
Semi-Lagrangian methods and Lagrangian methods do not require a restrictive time step size \cite{mang2017lagrangian}. Semi-Lagrangian methods do not leverage the memory requirements compared to Eulerian methods. In addition, an interpolation step after every time-step is introduced, which can add artificial dissipation. 
For Lagrangian methods, such an interpolation step is only applied once and in a first-discretize-then-optimize framework gradients can be computed in a memory efficient way \cite{mang2017lagrangian}. However, differentiation with respect to the input function $\phi_a$ is required. 

\begin{remark}
    Since we work with noisy data and in order to avoid dealing with appropriate interpolation schemes, we choose the Eulerian framework, where it is not required to differentiate the data.
    Hence, we transform the input function $\phi_a$ directly by considering optimization problem \eqref{eq::reducedfunctional} with \eqref{eq::pde} as governing equation. 
\end{remark}

\subsection{Discretization}
\label{subsec:discretization}
Image registration based on the Eulerian framework has been approached with explicit pseudospectral RK2 schemes \cite{mang2015inexact, mang2016constrained}, explicit higher order TVD schemes \cite{benzi2011preconditioning}, implicit Lax-Friedrich schemes \cite{chen2011image} and explicit DG0 finite elements with upwinding \cite{hart2009optimal}.
Using DG finite elements seems to be a natural choice since the images are typically piecewise constant on voxels. Moreover, using a DG approach allows us to compute the inverse of the mass matrix element-wise, since the mass matrix attains a block-diagonal structure, where each block addresses the degrees of freedom of one cell. %Pseudospectral methods also allow to efficiently construct inverse differential operators \cite{mang2016constrained}. 
Solving the transport equation with non-smooth initial data leads to drawbacks, see e.g. \cite{piotrowska2019spectral} for pseudospectral methods and remark \ref{remark:jumps} for DG approaches. We use first order discontinuous Galerkin (DG) finite elements to discretize $\phi_{e}$, $\phi_{a}$ and $\phi(\cdot, t)$, since in our numerical realization we discretize trial and test functions in the same way and first order spatial derivatives appear in \eqref{eq::final_weak_form}.

\subsubsection{Upwind DG scheme}

In the literature, e.g. \cite{di2011mathematical, brezzi2004discontinuous, LeVeque, hochbruck2015efficient}, the upwind DG scheme \cite[(3.8)]{di2011mathematical} is heuristically introduced. Centered fluxes are motivated by considering the spatial weak formulation. The upwind scheme is typically motivated by adding stability to the bi-linear form by jump penalization. We give a formal motivation from a different perspective based on the space--time weak formulation in section \ref{sec::appendix::motivation}.

Let the finite element mesh on  $\Omega = \Omega_h$ be given as a collection of cells $E \in \mathcal E_h$ and interior facets $F \in \mathcal F_h$.
Let $x \in F$ and denote by $E_1$, $E_2$ the neighboring cells of the facet $F$, chosen arbitrarily but fixed. 
We then define jumps and averages analogously to \cite[Def. 1.17]{di2011mathematical} as
\begin{align*}
    &\{\!\{ v \}\!\} (x) := \frac12 (v \vert_{E_1} (x) + v \vert_{E_2} (x)), \\
    &[\![ v ]\!] (x) := v\vert_{E_1} (x) - v \vert_{E_2} (x).
\end{align*}
The outwards pointing normal vector on each element $E \in \mathcal E_h$ is denoted by $n_{E}$.
Moreover, on the exterior domain boundary, the normal $n_F = n\vert_F$ is the outwards pointing normal vector.
On interior facets, it is the normal vector pointing from $E_1(F)$ to $E_2(F)$, see \cite[Fig. 1.4]{di2011mathematical}. In the following, whenever we write $E_1$ or $E_2$, we mean $E_1(F)$ and $E_2(F)$ depending on the facet $F$ that we are considering. Moreover, let $\mDG := \mathbb P_d^k(\mathcal E_h)$, where 
\begin{align}
    \mathbb P_d^k(\mathcal E_h) := \lbrace v \in L^2(\Omega) ~: ~\forall E \in \mathcal E_h, ~v \vert_{E} \in \mathbb{P}_d^k(E) \rbrace
    \label{discrete_space_Vh}
\end{align}
and $\mathbb P_d^k(E)$ denotes the space of polynomials of $d$ variables with degree of at most $k$ on $E$ \cite[Sec. 1.2.4]{di2011mathematical}.
In addition, we define 
\begin{align*}
    &f(\phi\vert_{E_1}, \phi\vert_{E_2}, v, n_F)(x, s) := \begin{cases}
    \phi\vert_{E_1} (x, s) v (x) \cdot n_F (x) \quad &\text{if } v \cdot n_F \geq 0, \\
    \phi\vert_{E_2} (x, s) v (x) \cdot n_F (x) & \text{else},
    \end{cases}\\
    &= \phi\vert_{E_1}(x, s) \max(0, v (x) \cdot n_F (x)) + \phi\vert_{E_2}(x, s) \min(0, v (x) \cdot n_F (x)) \\
    &= \phi\vert_{E_1}(x, s) \max(0, v (x) \cdot n_F (x)) - \phi\vert_{E_2}(x, s) \max(0, v (x) \cdot (-n_F) (x)).
\end{align*}

We work with the upwind scheme
\begin{align}
\begin{split}
&  \int_\Omega \partial_t\phi (x, t) \psi(x) dx - \sum_{E \in \mathcal E_h} \int_E \mathrm{div}(v(x)) \psi(x) \phi(x, t) dx - \sum_{E \in \mathcal E_h} \int_{E} \phi(x, t) v(x) \cdot \nabla \psi(x) dx \\ & + \sum_{F \in \mathcal F_h} \int_F [\![ \psi(x) ]\!] f(\phi\vert_{E_1}(x,t), \phi\vert_{E_2}(x,t), v(x), n_F(x)) dS = 0,
\end{split}
\label{eq::final_weak_form}
\end{align}
which is equivalent to \cite[(3.7)]{di2011mathematical}. Hence, we are in the setting of \cite[Chapter 3]{di2011mathematical}.

\begin{remark}
\label{remark:jumps}
    For solutions with jumps, the literature is scarce. The upwind scheme \cite[(3.8)]{di2011mathematical} can lead to spurious oscillations for higher order DG methods \cite[Section 3.1.4.4]{di2011mathematical}. In 1d, the regions around the shocks, so-called pollution regions, can be quantified \cite{shu2016discontinuous, zhang2014error, cockburn2008error}. In order to avoid oscillations, monotonicity preserving schemes were introduced. Godunov's theorem states that linear monotonicity preserving schemes can at most be first order accurate. Further research in that area, also in the context of non-linear hyperbolic equations, is devoted to circumvent Godunov's theorem, which led to MUSCL, TVD and other schemes. The topic is discussed in more detail in \cite{lu2021oscillation}. 
\end{remark}

\begin{remark}
    In \cite[Chapter 3]{di2011mathematical}, for $\phi \in \mathcal C^0([0,T], H^1(\Omega)) \cap \mathcal C^1([0,T], L^2(\Omega))$, it is shown that the upwind DG scheme \cite[(3.8)]{di2011mathematical} combined with appropriate explicit time-stepping scheme converges towards the true solution for decreasing time step and mesh size as long as an appropriate CFL condition is fulfilled. The convergence theory presented in \cite{di2011mathematical} proves that convergence towards a true smooth solution can be ensured under a CFL condition $\Delta t \leq C \frac{h}{\| v\|_{L^\infty(\Omega)^d}}$ if one chooses a DG$0$-discretization in space combined with explicit Euler in time \cite[Theorem 3.7]{di2011mathematical} or a DG$1$-discretization in space combined with an explicit RK2 scheme in time \cite[Theorem 3.10]{di2011mathematical}. Moreover, it suggests that a more restrictive CFL condition is needed for DG$1$-discretization in space combined with explicit Euler in time \cite[Remark 3.21]{di2011mathematical}.
\end{remark}

\begin{remark}
    Due to the above considerations, we work with an explicit RK2 scheme. As discussed above, from a theoretical point of view, the input and target images $\phi_a$ and $\phi_e$ need to be either considered as discrete representations of smooth functions or preprocessed by smoothing in order to have theoretical guarantees. 
In this work, we do the former.
\end{remark}

\subsubsection{Formal motivation of upwind DG formulation as approximation of space--time variational formulation}
\label{sec::appendix::motivation}

In the literature, e.g. \cite{di2011mathematical, brezzi2004discontinuous}, the upwind DG scheme \cite[(3.8)]{di2011mathematical} is heuristically introduced by considering the spatial weak variational formulation to introduce centered fluxes and adding stability to the bilinear form by jump penalization. We give a different formal motivation by considering the space-time weak formulation. Our motivation takes advantage of standard ideas and reformulations that are, e.g., also used in \cite{LeVeque, hochbruck2015efficient} and sets them into a different perspective. 

We show that the upwind DG scheme \cite[(3.8)]{di2011mathematical} can be motivated as an approximation to the space-time weak variational form if it is assumed that $\phi \in C^1([0,T], L^2(\Omega)) \cap C([0,T], H^1(\Omega))$.
% We first introduce some notation.
% Let the finite element mesh on  $\Omega = \Omega_h$ be given as a collection of cells $E \in \mathcal E_h$ and interior facets $F \in \mathcal F_h$.
% Let $x \in F$ and denote by $E_1$, $E_2$ the neighboring cells of the facet $F$, chosen arbitrarily but fixed. 
% We then define jumps and averages analogously to \cite[Def. 1.17]{di2011mathematical} as
% \begin{align*}
%     &\{\!\{ \eta \}\!\} (x) := \frac12 (\eta \vert_{E_1} (x) + \eta \vert_{E_2} (x)), \\
%     &[\![ \eta ]\!] (x) := \eta\vert_{E_1} (x) - \eta \vert_{E_2} (x).
% \end{align*}
% The outwards pointing normal vector on each element $E \in \mathcal E_h$ is denoted by $n_{E}$.
% Moreover, on the exterior domain boundary, the normal $n_F = n\vert_F$ is the outwards pointing normal vector.
% On interior facets, it is the normal vector pointing from $E_1(F)$ to $E_2(F)$, see \cite[Fig. 1.4]{di2011mathematical}. In the following, whenever we write $E_1$ or $E_2$, we mean $E_1(F)$ and $E_2(F)$ depending on the facet $F$ that we are considering. Moreover, let $\mDG := \mathbb P_d^k(\mathcal E_h)$, where 
% \begin{align*}
%     \mathbb P_d^k(\mathcal E_h) := \lbrace \eta \in L^2(\Omega) ~: ~\forall E \in \mathcal E_h, ~\eta \vert_{E} \in \mathbb{P}_d^k(E) \rbrace
% \end{align*}
% and $\mathbb P_d^k(E)$ denotes the space of polynomials of $d$ variables with degree of at most $k$ on $E$ \cite[Sec. 1.2.4]{di2011mathematical}.
We know that for sufficiently smooth $v$ and $\phi_{a}$ 
the solution of \eqref{eq::pde} fulfills
\begin{itemize}
\item by definition the weak form 
\begin{align}
\int_0^T \int_\Omega \phi(x,s) (\partial_s \psi(x,s) + \mathrm{div}(\psi(x,s) v(x))) dx \, ds = - \int_\Omega \phi_{a}(x) \psi(x, 0) dx
\label{eq::weakform}
\end{align}
for all $\psi \in C_c^\infty([0, T) \times \Omega)$, see \cite[Definition 2.1]{jarde2019existence} or, for divergence free velocities, \cite[Definition 2]{chen2011image}. 
\item the relation 
\begin{align}
\phi(x,t) = \phi_{a}(X(x,t)),
\label{eq::solrel}
\end{align}
where $X$ is the solution of
\begin{align}
\begin{split}
\partial_t X - \nabla X^\top v = 0 \quad &\text{on }Q_T, \\
X(0) = x \quad & \text{in } \Omega,
\end{split}
\label{eq::pdeX}
\end{align}
see \cite[Theorem 4]{chen2011image} or, under mild regularity assumptions, \cite[Theorem 5.1.15]{jarde2018analysis}.
\end{itemize}

For the sake of simplicity and in correspondence with our choice in section \ref{sec::3.1.2}, we consider velocity fields $v \in W^{1, \infty}(\Omega)^d$ such that $v\vert_{\Omega} = 0$ on the outer boundary of $\Omega$.
In order to derive the upwind DG scheme \cite[(3.8)]{di2011mathematical}, we are going to use \eqref{eq::weakform} and \eqref{eq::solrel}. 

Integrating the weak form \eqref{eq::weakform} by parts in space yields
\begin{align*}
\begin{split}
    &\int_0^T \int_\Omega \phi(x,s) \partial_s \psi(x,s)\, dx\, ds  - \int_0^T \int_\Omega \psi(x, s) v(x) \cdot \nabla \phi(x, s)\, dx\, ds = - \int_\Omega \phi_{a}(x) \psi(x, 0) dx.
\end{split}
\end{align*}
Collecting the integrals over the elements as integrals over facets yields
\begin{align}
    \begin{split}
    \int_0^T \int_\Omega \phi(x,s) \partial_s \psi(x,s)\, dx\, ds  - \int_0^T \sum_{E \in \mathcal E_h} \int_E \psi(x, s) v(x) \cdot \nabla \phi(x, s)&\, dx\, ds \\
    & = - \int_\Omega \phi_{a}(x) \psi(x, 0) dx,
\end{split}
\label{eq::ds3}
\end{align}

Since, in \eqref{eq::ds3}, there is no derivative on $\psi$ left, we can leverage the requirements on $\psi$ by considering \eqref{eq::ds3} for a sequence of smooth functions $\psi_n \in C_c^\infty([0, T) \times \Omega)$ that we pass to the limit. Let $\psi \in C^1([0,T],V_h)$. Choosing $\psi_n$ such that $\lim_{n \to \infty} \| \psi_n - \psi 1_{E}\|_{L^p(\Omega)} = 0$ for some $p \in [1, \infty)$ and $1_E$ denoting the characteristic function of $E$, and summing up over the elements yields
\begin{align}
    \begin{split}
       \int_0^T \int_\Omega \phi(x,s) \partial_s \psi(x,s)\, dx\, ds  &- \int_0^T \sum_{E \in \mathcal E_h} \int_E \psi(x, s) v(x) \cdot \nabla \phi(x, s)\, dx\, ds = - \int_\Omega \phi_{a}(x) \psi(x, 0) dx 
       \label{eq::ds4}
    \end{split}
\end{align}
for all $\psi \in C^1([0,T), \mDG)$.

Integrating \eqref{eq::ds4} by parts yields 
\begin{align*}
\begin{split}
    &\int_0^T \int_\Omega \phi(x,s) \partial_s \psi(x,s)\, dx\, ds  + \int_0^T \sum_{E \in \mathcal E_h} \int_E \mathrm{div}(\psi(x, s) v(x)) \phi(x, s)\, dx\, ds  \\
    & - \int_0^T \sum_{F \in \mathcal F_h} \int_F [\![ \psi(x, s) \phi(x, s) ]\!] v \cdot n_F  \, dS\, ds = - \int_\Omega \phi_{a}(x) \psi(x, 0) dx ,
    \end{split}
\end{align*}
which is equivalent to
\begin{align}
\begin{split}
    &\int_0^T \int_\Omega \phi(x,s) \partial_s \psi(x,s)\, dx\, ds  + \int_0^T \sum_{E \in \mathcal E_h} \int_E \mathrm{div}(v(x)) \psi(x, s) \phi(x, s)\, dx\, ds  \\
    &+ \int_0^T \sum_{E \in \mathcal E_h} \int_E  \phi(x, s) v(x) \cdot \nabla \psi(x, t)\, dx\, ds   - \int_0^T \sum_{F \in \mathcal F_h} \int_F [\![ \psi(x, t) \phi(x, s) ]\!] v \cdot n_F  \, dS\, ds \\
    & = - \int_\Omega \phi_{a}(x) \psi(x, 0) dx.
    \end{split}
    \label{eq::ds_25}
\end{align}
Let $t \in [0,T)$, $\Delta t > 0$ be such that $t + \Delta t \in (0,T]$. We do an integration by parts in time and consider a sequence of test function $\psi^{t, \Delta t}_n(x, s) = \psi(x) \chi^{t, \Delta t}_n(s)$ with a time-independent $\psi$ and $\chi^{t, \Delta t}_n \to \chi_{t, \Delta t}$ for $n \to \infty$ in $L^2((0,T), V_h)$ with $\chi_{t, \Delta t}(s) = \begin{cases}
1 \quad & \text{for } s \in [t, t + \Delta t), \\
0 \quad & \text{else}. \\
\end{cases}$\\
Hence,
\eqref{eq::ds_25} tested with $\psi_{t, \Delta t}$, is given by
\begin{align*}
\begin{split}
 & - \int_t^{t + \Delta t} \int_\Omega \partial_t \phi(x, s) \psi(x)  \, dx ds + \int_{t}^{t + \Delta t} \sum_{E \in \mathcal E_h} \int_E \mathrm{div}(v(x)) \psi(x) \phi(x, s)\, dx\, ds \\ 
 &+ \int_{t}^{t + \Delta t} \sum_{E \in \mathcal E_h} \int_E  \phi(x, s) v(x) \cdot \nabla \psi(x)\, dx\, ds   - \int_{t}^{t + \Delta t} \sum_{F \in \mathcal F_h} \int_F [\![ \psi(x) \phi(x, s) ]\!] v \cdot n_F  \, dS\, ds = 0,
 \end{split}
\end{align*}
if $t> 0$, and 
\begin{align*}
\begin{split}
 & - \int_0^{\Delta t} \int_\Omega \partial_t \phi(x,s) \psi(x) \, dx - \int_\Omega \phi(x, 0) \psi(x) \, dx + \int_{0}^{\Delta t} \sum_{E \in \mathcal E_h} \int_E \mathrm{div}(v(x)) \psi(x) \phi(x, s)\, dx\, ds \\ 
 &+ \int_{0}^{\Delta t} \sum_{E \in \mathcal E_h} \int_E  \phi(x, s) v(x) \cdot \nabla \psi(x)\, dx\, ds   - \int_{t}^{t + \Delta t} \sum_{F \in \mathcal F_h} \int_F [\![ \psi(x) \phi(x, s) ]\!] v \cdot n_F  \, dS\, ds \\ & = - \int_\Omega \phi_{a}(x) \psi(x) dx,
 \end{split}
\end{align*}
if $ t = 0$. 
Integration by parts in time yields
\begin{align*}
\begin{split}
 & - \int_\Omega (\phi(x, t + \Delta t) - \phi(x, t)) \psi(x)  \, dx ds + \int_{t}^{t + \Delta t} \sum_{E \in \mathcal E_h} \int_E \mathrm{div}(v(x)) \psi(x) \phi(x, s)\, dx\, ds \\ 
 &+ \int_{t}^{t + \Delta t} \sum_{E \in \mathcal E_h} \int_E  \phi(x, s) v(x) \cdot \nabla \psi(x)\, dx\, ds   - \int_{t}^{t + \Delta t} \sum_{F \in \mathcal F_h} \int_F [\![ \psi(x) \phi(x, s) ]\!] v \cdot n_F  \, dS\, ds = 0,
 \end{split}
\end{align*}
if $t> 0$, and 
\begin{align*}
\begin{split}
 & -\int_\Omega \phi(x,\Delta t) \psi(x) \, dx  + \int_{0}^{\Delta t} \sum_{E \in \mathcal E_h} \int_E \mathrm{div}(v(x)) \psi(x) \phi(x, s)\, dx\, ds \\ 
 &+ \int_{0}^{\Delta t} \sum_{E \in \mathcal E_h} \int_E  \phi(x, s) v(x) \cdot \nabla \psi(x)\, dx\, ds   - \int_{t}^{t + \Delta t} \sum_{F \in \mathcal F_h} \int_F [\![ \psi(x) \phi(x, s) ]\!] v \cdot n_F  \, dS\, ds \\ & = - \int_\Omega \phi_{a}(x) \psi(x) dx,
 \end{split}
\end{align*}
if $ t = 0$.

At this point, we approximate $\phi_a$ and $\phi$ by a piecewise smooth function $\bar \phi_a \in V_h$ and $\bar \phi (x,t) = \bar \phi_a \circ X(x,t)$. Doing this approximation, we consider
\begin{align}
\begin{split}
 & - \int_\Omega (\bar \phi(x, t + \Delta t) - \bar \phi(x, t)) \psi(x) \, dx + \int_{t}^{t + \Delta t} \sum_{E \in \mathcal E_h} \int_E \mathrm{div}(v(x)) \psi(x) \bar \phi(x, s)\, dx\, ds \\ 
 &+ \int_{t}^{t + \Delta t} \sum_{E \in \mathcal E_h} \int_E  \bar \phi(x, s) v(x) \cdot \nabla \psi(x)\, dx\, ds   - \int_{t}^{t + \Delta t} \sum_{F \in \mathcal F_h} \int_F [\![ \psi(x) \bar \phi(x, s) ]\!] v \cdot n_F  \, dS\, ds = 0,
 \end{split}
 \label{eq::integral1}
\end{align}
if $t> 0$, and 
\begin{align*}
\begin{split}
 & - \int_\Omega (\bar \phi(x, t + \Delta t)) \psi(x) \, dx + \int_{t}^{t + \Delta t} \sum_{E \in \mathcal E_h} \int_E \mathrm{div}(v(x)) \psi(x) \bar \phi(x, s)\, dx\, ds \\ 
 &+ \int_{t}^{t + \Delta t} \sum_{E \in \mathcal E_h} \int_E  \bar \phi(x, s) v(x) \cdot \nabla \psi(x)\, dx\, ds   - \int_{t}^{t + \Delta t} \sum_{F \in \mathcal F_h} \int_F [\![ \psi(x) \bar \phi(x, s) ]\!] v \cdot n_F  \, dS\, ds \\ & = - \int_\Omega \bar \phi_{a}(x) \psi(x) dx,
 \end{split}
\end{align*}
if $ t = 0$.

For the latter, considering $\Delta t \to 0$ yields the initial condition
\begin{align}
\int_\Omega \bar \phi(x, 0) \psi(x) dx = \int_\Omega \bar \phi_{a} (x) \psi(x) dx.
\end{align}
For the consideration of \eqref{eq::integral1}, we define 
\begin{align*}
    &f(\bar \phi\vert_{E_1}, \bar \phi\vert_{E_2}, v, n_F)(x, s) := \begin{cases}
    \bar \phi\vert_{E_1} (x, s) v (x) \cdot n_F (x) \quad &\text{if } v \cdot n_F \geq 0, \\
    \bar \phi\vert_{E_2} (x, s) v (x) \cdot n_F (x) & \text{else},
    \end{cases}\\
    &= \bar \phi\vert_{E_1}(x, s) \max(0, v (x) \cdot n_F (x)) + \bar \phi\vert_{E_2}(x, s) \min(0, v (x) \cdot n_F (x)) \\
    &= \bar \phi\vert_{E_1}(x, s) \max(0, v (x) \cdot n_F (x)) - \bar \phi\vert_{E_2}(x, s) \max(0, v (x) \cdot (-n_F) (x)).
\end{align*}

Since we consider the space-time weak formulation (instead of the weak formulation in space), upwind-type fluxes appear as natural choice in the DG scheme (instead of centered fluxes). Let $X$ be defined as the solution of \eqref{eq::pdeX}. Moreover, \eqref{eq::solrel} holds, i.e. $\phi$ is transported along the characteristics of $X$ such that $\bar \phi(x,s) = \bar \phi(X(x,t-s), t)$. If $v(x) \cdot n_F > 0$ ($v(x) \cdot n_F <0$) and $s-t>0$ is sufficiently small, then $X(x, t-s) \in E_1$ ($X(x, t-s) \in E_2$). 
Therefore, for $x \in \bigcup_{F \in \mathcal F_h} F$, 
$$\lim_{s \to t^+} \bar \phi (X(x, t-s), t) v \cdot n_F = f(\bar \phi\vert_{E_1}(x,t), \bar \phi\vert_{E_2}(x,t), v(x), n_F(x)).$$ 
Hence, the point-wise limit is given as
\begin{align*}
    \lim_{s \to t^+} [\![ \psi(x) \bar \phi(x,s) ]\!] v \cdot n_F = [\![\psi (x)]\!] f(\bar \phi\vert_{E_1}(x,t), \bar \phi\vert_{E_2}(x,t), v(x), n_F(x))
\end{align*}
since 
\begin{itemize}
    \item either there is a jump of $\bar \phi$ at $(x,t)$, i.e. $\bar \phi(x,t)\vert_{E_1} \neq \bar \phi(x,t)\vert_{E_2}$: The case $v \cdot n_F = 0$ is trivial. For $v \cdot n_F > 0$ ($v \cdot n_F <0$) it holds that $X(x,t-s) \in E_1$ ($X(x,t-s) \in E_2$) and, therefore, $\bar \phi(x, s)\vert_{E_1} = \bar \phi(x, s)\vert_{E_2}$ for $s-t> 0$ sufficiently small. 
    \item or there is no jump of $\bar \phi$ at $(x,t)$, i.e., there exists a $\epsilon = \epsilon(x) > 0$ such that $\phi(x,s)\vert_{E_1} = \phi(x,s) \vert_{E_2}$ for $s \in [t, t+ \epsilon)$.
\end{itemize}
From the dominated convergence theorem it follows that 
\begin{align*}
\begin{split}
&\lim_{\Delta t \to 0^+} \frac{1}{\Delta t} \int_{t}^{t+\Delta t} \int_F [\![ \psi (x) \bar \phi(x, s) ]\!] v \cdot n_F dS ds \\  &= \int_F [\![\psi (x)]\!] f(\bar \phi\vert_{E_1}(x,t), \bar \phi\vert_{E_2}(x,t), v(x), n_F(x)) dS
\end{split}
\end{align*}
for all $F \in \mathcal F_h$.
\label{lemma::1}

Moreover, it holds that
\begin{align}
\lim_{\Delta t \to 0^+} \frac{1}{\Delta t}\int_{\Omega} ( \bar \phi(x,t + \Delta t) - \bar \phi(x, t)) \psi(x) dx = \int_\Omega \bar \phi_t(x, t) \psi(x) dx,
\label{eq::lemma42::1}
\end{align}
and
\begin{align}
\lim_{\Delta t \to 0^+} \frac{1}{\Delta t} \int_{t}^{t + \Delta t} \int_E \mathrm{div}(v(x)) \psi(x) \bar \phi(x,s) dx \, ds = \int_E \mathrm{div}(v(x)) \psi(x) \bar \phi(x,t) dx,
\label{eq::lemma42::2}
\end{align}
\begin{align}
\lim_{\Delta t \to 0^+} \frac{1}{\Delta t} \int_{t}^{t + \Delta t} \int_E \bar \phi(x,s) v(x) \cdot \nabla \psi(x) dx ds = \int_E \bar \phi(x,t) v(x) \cdot \nabla \psi(x) dx,
\label{eq::lemma42::3}
\end{align}
for all $E \in \mathcal E_h$.
\label{lemma::2}

This can be shown as follows: 
\eqref{eq::lemma42::1} follows from the dominated convergence theorem and the regularity of $\bar \phi$. To prove \eqref{eq::lemma42::2} we consider 
\begin{align*}
& | \frac{1}{\Delta t} \int_t^{t + \Delta t} \int_E \mathrm{div}(v(x)) \psi (x) \bar \phi(x,s) dx ds - \int_E \mathrm{div}(v(x)) \psi(x) \bar \phi(x,t) dx | \\
& = \frac{1}{\Delta t} | \int_t^{t + \Delta t} \int_E \mathrm{div}(v(x)) \psi(x) (\bar \phi(x,s) - \bar \phi(x,t)) dx ds| \\
& \leq \| \mathrm{div}(v(x)) \psi(x)\|_{L^\infty(E)} \max_{s \in [t, t + \Delta t]} \| \bar \phi(x,s) - \bar \phi(x,t) \|_{L^1(E)}
\end{align*}
and use continuity of $\bar \phi$ to show that the right hand side converges to zero for $\Delta t \to 0$. \eqref{eq::lemma42::3} follows analogously.

Hence, dividing \eqref{eq::integral1} by $\Delta t$ and considering the limit $\Delta t \to 0^+$ yields:
\begin{align*}
\begin{split}
&  \int_\Omega \partial_t \bar \phi (x, t) \psi(x) dx - \sum_{E \in \mathcal E_h} \int_E \mathrm{div}(v(x)) \psi(x) \bar \phi(x, t) dx - \sum_{E \in \mathcal E_h} \int_{E} \bar\phi(x, t) v(x) \cdot \nabla \psi(x) dx \\ & + \sum_{F \in \mathcal F_h} \int_F [\![ \psi(x) ]\!] f(\bar \phi\vert_{E_1}(x,t), \bar \phi\vert_{E_2}(x,t), v(x), n_F(x)) dS = 0.
\end{split}
\end{align*}

\subsubsection{Smoothed upwinding}
We want to use gradient-based optimization techniques to determine the unknown velocity $v$ that deforms $\phi_a$ to a target $\phi_e$. 
The discrepancy measure $\mathcal{J}_d$ \eqref{eq::reducedfunctional} needs to be differentiable with respect to $v$. 
This requires differentiability of the solution to \eqref{eq::final_weak_form} with respect to $v$.
However, this is in general not the case.
Due to the $\max$ operator, the upwind flux function in the \eqref{eq::final_weak_form} is not differentiable w.r.t. $v$ in all points $x$, for which $v(x)\cdot n(x) = 0$. 
Therefore, we introduce a smoothed maximum operator defined as 
\begin{align*}
    \operatorname{max}_\epsilon(0,x) := \sigma_\epsilon(x)x
\end{align*}
with
\begin{align*}
    \sigma_\epsilon(x) = \frac{1}{1+e^{-\frac{1}\epsilon x}}
\end{align*}
and propose a smoothed version of the upwind scheme by using
\begin{align}
    f_\epsilon(\phi\vert_{E_1}, \phi\vert_{E_2}, v, n_F) &:= \phi\vert_{E_1} \operatorname{max}_\epsilon(0, v \cdot n_F) - \phi\vert_{E_2} \operatorname{max}_\epsilon(0, v \cdot (-n_F)).\label{eq::smoothed-flux}
\end{align}
This flux function is consistent since, if $\phi\vert_{E_1} = \phi\vert_{E_2} = \bar \phi $,
\begin{align*}
    f_\epsilon(\bar{\phi}, \bar{\phi}, v, n_F) &= \bar{\phi} \operatorname{max}_\epsilon(0, v \cdot n_F) - \bar{\phi} \operatorname{max}_\epsilon(0, v \cdot (-n_F)) \\
     &= \Bar{\phi} [ \sigma_\epsilon(v\cdot n_F)v\cdot n_F - \sigma_\epsilon(v\cdot (-n_F) ) v\cdot (-n_F) ] \\
     &= \Bar{\phi} [\sigma_\epsilon(v\cdot n_F) + \sigma_\epsilon(-v\cdot n_F)] v\cdot n_F \\
     &= \Bar{\phi} v\cdot n_F,
\end{align*}
where we used that the sigmoid function satisfies $\sigma_\epsilon(x) + \sigma_\epsilon(-x) = 1$.

\subsection{Optimization} 
\label{subsec:numerics-optimization}
For solving the optimization problem \eqref{eq::reducedfunctional}--\eqref{eq::pde} and its (Semi-)
Lagrangian variants, different approaches have been considered and discussed in the literature. These include gradient descent methods, e.g. \cite{chen2011image}, and inexact Newton and Gauss-Newton schemes, e.g. \cite{mang2015inexact, benzi2011preconditioning}. In its reduced form the image registration problem \eqref{eq::reducedfunctional}--\eqref{eq::pde} is given by
\begin{align}
\min_{v \in V_{ad}} j(v) + \gamma \mathcal R(v),
\label{optprob::reduced}
\end{align}
where $j(v) := \frac12 \mathcal J_d (\phi(\cdot, T), \phi_e)$ and $\phi(\cdot, T)$ solves \eqref{eq::pde}. Since \eqref{optprob::reduced} is an unconstrained optimization problem, solving it can be done by considering the first-order optimality conditions and solving the corresponding nonlinear system of equations iteratively with preconditioning, see e.g. \cite{mang2015inexact, benzi2011preconditioning}. Another approach, which is more related to current machine learning approaches and which we follow in this work, is solving \eqref{optprob::reduced} via general purpose optimization solvers.
In the following, we discuss the choice of $\mathcal R(v)$ and $V_{ad}$ and its discretization, and apply a state-of-the-art optimization solver to solve \eqref{optprob::reduced}. 

Due to the considerations in Lemma \ref{lemma:2-1}, the choice $V_{ad} = W^{1, \infty}(\Omega)$ is suitable.
Optimization in $W^{1, \infty}(\Omega)$ is challenging and subject to current research \cite{deckelnick2022novel}.
We consider an alternative approach that works in a Hilbert space setting, i.e., optimization for $v \in H^2(\Omega)$. This is justified from a discretized perspective, since discretization with CG1 finite elements combined with the discretized $H^k$-norm, $k \geq 1$, ensures the regularity requirements of Lemma \ref{lemma:2-1} if the mesh is fixed. 
In order to avoid a discretization of the velocity $v$ with $H^2$-conforming finite elements, we introduce an auxiliary variable $\tilde v \in L^2(\Omega)$ from which $v$ is computed via solving the elliptic problem
\begin{align}
\begin{split}
       \alpha v - \beta \Delta v & = \tilde v \quad \text{in } \Omega, \\
    v &= 0 \quad \text{on } \partial \Omega.
    \label{math:velocity-pre--processing}
\end{split}
\end{align}
Here, $\alpha, \beta > 0$ are hyperparameters and their influence are discussed in the numerical results in Section \ref{subsubsec:imagereg}.
%that control the smoothness of the optimized velocity field.
We hence optimize $\mathcal{J}_d$ for $\tilde v \in L^2(\Omega)$. General-purpose optimization algorithms, such as L-BFGS-B \cite{liu1989limited} implemented in SciPy~\cite{virtanen2020scipy}, work with the Euclidean inner product $\ell^2$.
Directly using the nodal values $ \tilde{\mathbf v}_i$ of $\tilde v$ as $\ell^2$ control variable can lead to slow convergence \cite{schwedes2017mesh, kelley1987quasi}. As in \cite[Sec. 7.2]{haubner2023novel}, we introduce a way to respect the function space perspective.
We use bold-face symbols to denote the degrees of freedom of the corresponding finite element function and introduce the control variable $\hat{\mathbf v} \in \mathbb{R}^n$ where $n$ is the number of degrees of freedom of the finite element discretization of $\tilde v$. The former are related via the linear transformation
\begin{align}
    \tilde{\mathbf v} = \mathcal{C}^{-1} \hat{\mathbf v}, \label{math:L2l2transform}
\end{align}
where $\mathcal{C}^\top \mathcal{C} = \mathcal{M}$ is the Cholesky factorization of the inner product matrix $\mathcal{M}$. In our case, $\mathcal{M}$ is the lumped mass matrix and hence the linear transformation \eqref{math:L2l2transform} is a scaling and promotes mesh independence. We note that directly working with control variables $v \in H^k(\Omega)$ and applying the linear transformation requires computing the action of the inverse of the Cholesky factors of the stiffness matrix. This is currently not implemented in parallel in our software framework, and therefore we use the approach presented above. A graphical summary of the approach is given in Fig. \ref{fig:algorithm}.

We work with the regularization (see \eqref{eq::reducedfunctional}) defined by
\begin{align*}
    \mathcal{R}(v)=\mathcal{R}(v(\tilde v)) = \int_{\Omega} \|\tilde v(x)\|^2 \,dx. % \int_{\Omega} \left( \tilde v \right)^2 \,dx.
\end{align*}
Alternatively, one can also directly work with $\hat{\mathbf v}\cdot \hat{\mathbf v}$.

\begin{sidewaysfigure}[p]
    \centering
    \includegraphics[width=1.0\textwidth]{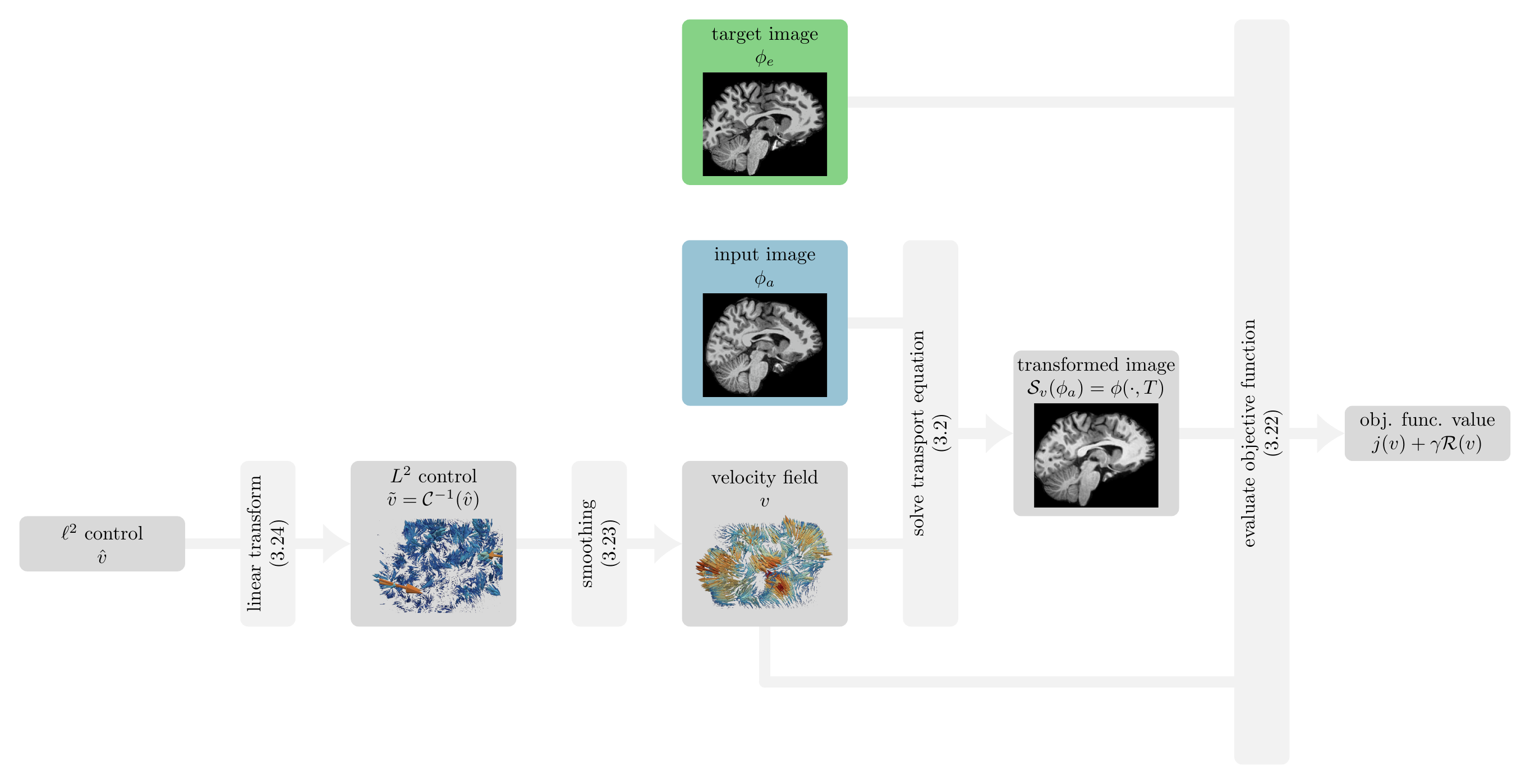}
    \caption{Implementation of the velocity-based image registration.
    }
    \label{fig:algorithm}
\end{sidewaysfigure}

\subsection{Numerical results}\label{subsubsec:imagereg}

We implement \eqref{eq::final_weak_form} with the smoothed flux term \eqref{eq::smoothed-flux} in FEniCS~\cite{alnaes2015fenics} using the explicit midpoint method, a second order Runge--Kutta scheme in time. For the results reported in this manuscript, we use fixed $N=100$ time steps and a time horizon $T=1$.
We discretize $v$ using continuous linear Lagrange elements.
The initial condition $\phi_a$ and the state $\phi$ are discretized by a discontinuous Galerkin method using first order polynomials. 

We introduce the discrete solution operator $\mathcal{S}_v$, mapping from a given initial condition $\phi_0$ to the final state, as
\begin{align*}
    {\mathcal S}_v( \phi_0) := \phi(\cdot,T),
\end{align*}
where $\phi$ solves the (DG1 discretized) PDE
\begin{align*}
     \partial_t \phi(x, t) +  v(x) \cdot \nabla \phi(x, t)  &= 0  \quad \hspace{8pt} \text{on } Q_T, \\
     \phi(\cdot,0) &= \phi_{0} \quad \text{ in } \Omega,
\end{align*}
as introduced in \eqref{eq::pde}.
We use the pre--processed MRI $\phi_a$ from \enquote{Abby} as initial condition $\phi_0$ and the starting guess $\hat{\mathbf v}=0$ to solve \eqref{eq::pde} numerically using the proposed discretization implemented in FEniCS~\cite{alnaes2015fenics} as described in Figure \ref{fig:algorithm}.
Gradients of the cost functional with respect to the control $\hat v$ are obtained using the dolfin-adjoint software~\cite{farrell2013automated}. 

To allow for more complex deformations, we take a multi-step approach, cf. remark \ref{sec:remark-time-dependency} and Fig. \ref{fig:multistep}. 
In detail, we start with the affine registered image from \enquote{Abby} and optimize for the first velocity field $v=v_1$ with a fixed number of iterations of L-BFGS-B. 
Starting from the resulting deformed image, we optimize for a new deformation velocity field $v_2$. 
Here, we repeat this scheme four times.
We shortly write $\mathcal S_i$ for velocity fields $v = v_i$, where $v_i$ denotes the optimal velocity found via solving \eqref{eq::reducedfunctional}--\eqref{eq::pde} with initial condition $\phi_{i-1}$. Hence, starting with the initial condition
\begin{math}
    \phi_0 = \phi_a,
\end{math}
we obtain
\begin{align*}
    & \phi_1 = \mathcal S_1( \phi_0), \quad\phi_2 = \mathcal S_2(\phi_1), \quad \phi_3 =  \mathcal S_3(\phi_2), \quad \phi_4 = \mathcal S_4(\phi_3). 
\end{align*}
This approach is visualized in Fig. \ref{fig:multistep}.
Recall that $\phi_a$ denotes the preprocessed template image from \enquote{Abby}.
\begin{figure}
\centering
\includegraphics[width=1\textwidth]{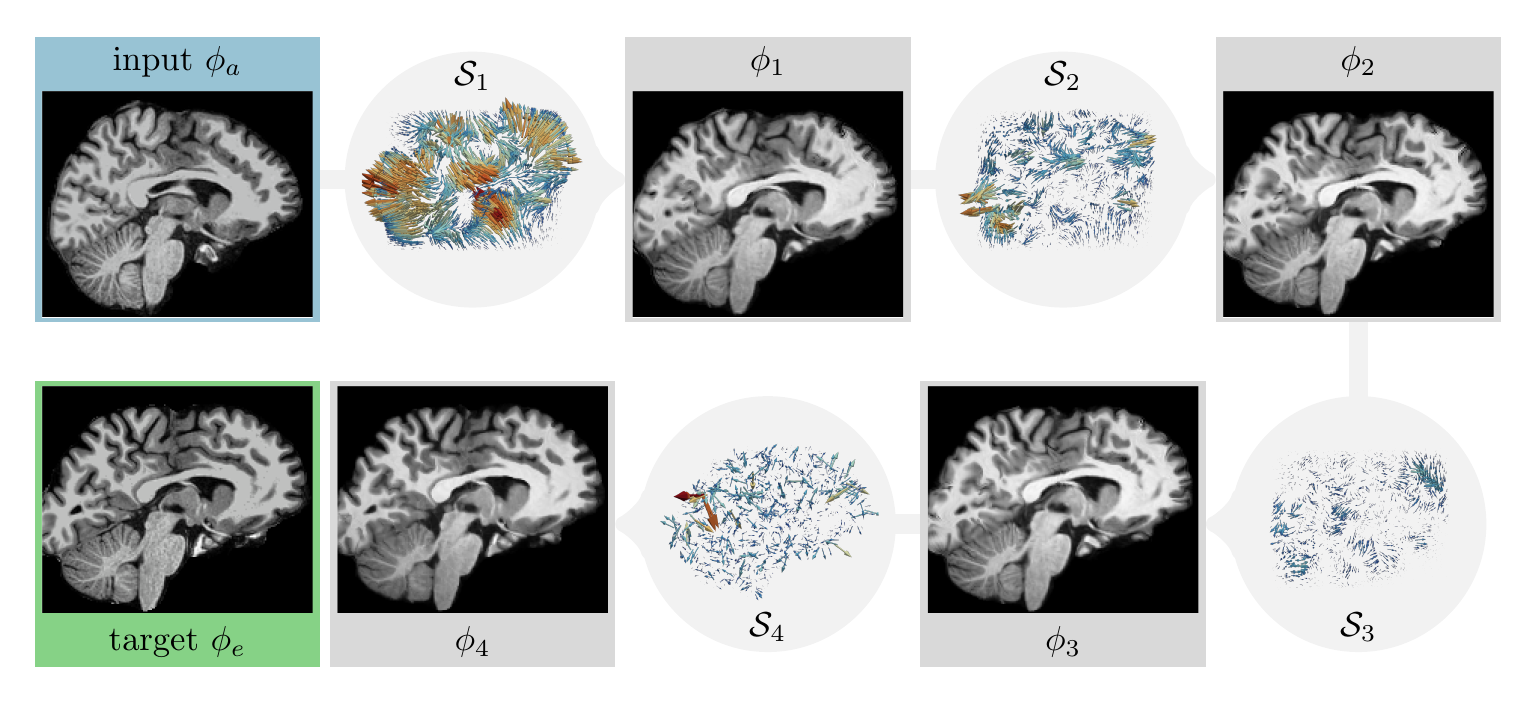}
\caption{
Illustration of our multistep registration approach. 
Starting from the affine pre--registered image $\phi_a$ (blue box), we apply a sequence of velocity field transformations to register the image to the target $\phi_e$ (green box).
Note that the vectors representing the velocity field are scaled equally in all illustrations, indicating that the first transformations learn the large distance deformations, while the last transformations learn to register small--scale details.
Further note that changing the hyperparameters $\alpha, \beta$ for $\mathcal{S}_4$ yields a velocity field with increased local variations as visible by the two large velocity vectors in the lower row.
\label{fig:multistep}
}
\end{figure}

We first optimize a series of three velocity registrations with distinct velocity fields $v_3$, $v_2$, $v_1$ for a total of 400 iterations of L-BFGS-B.
The hyperparameters to smoothen the control are set to $\alpha=0, \beta=1$.
In detail, we start from the affine registered image $\phi_a$ and optimize $v_1$ such that the discrepancy (\eqref{eq:data-discrepancy} and \eqref{eq:huber}) between the deformed image $\phi_1$ and the target image $\phi_e$ is reduced.
We then hold $v_1$ fixed and optimize $v_2$ such that the discrepancy between $\phi_2$ and $\phi_e$ is further reduced, and then repeat the procedure.
This reduces the $L^2$--discrepancy (which we only use for tracking the current progress of the optimization algorithm) between deformed image and target by 60 \% (relative to the affine pre--registration), cf. Fig. \ref{fig:optimization}. We next investigate whether our approach can improve upon the registration by changing the parameters $\alpha, \beta$ in \eqref{math:velocity-pre--processing}. 
We apply a fourth  registration with velocity field $v_4$ to the deformed image $\phi_3$ obtained with the procedure described above.
For this velocity field, we change from $\alpha=0, \beta=1$ to $\alpha=\beta=0.5$.
As can be seen from Fig. \ref{fig:optimization}, the mismatch between deformed image and target can be further reduced from 40 \% to 20 \% (relative to the affine pre--registration) after optimizing $v_4$ for 100 iterations. In Fig. \ref{fig:fs-comp-brain}, we depict the absolute difference between deformed images and target in color (heatmap) together with the target MRI (greyscale) in an exemplary axial slice. 

Additionally, we visualize a registration generated by the software package CLAIRE, see \cite{mang2019claire}. Unlike our current approach, this semi-Lagrangian framework is implemented in a highly efficient (GPU) manner and thus allows for automated hyperparameter tuning. It outperforms our three step registration. However, in Fig. \ref{fig:third} it can be observed that some notable error remains. Even with manual hyperparameter tuning we were not able to eliminate this error. Our four step procedure achieves better accuracy. 

\begin{remark}
    The multi--step deformation approach described above is equivalent to solving the PDE problem \eqref{eq::pde} with a time--dependent velocity over a time interval $[0, 4]$ where the velocity is piecewise constant in time,
    \begin{align*}
        v(\cdot, t) = v_i(\cdot), \quad t \in (i-1, i] , \quad i \in \{1,2,3,4\}.
    \end{align*}
Due to memory limitations, we optimize $v_1, \dots, v_4$ separately. This is in general not equivalent to optimizing over all velocities simultaneously. 
    \label{sec:remark-time-dependency}
\end{remark}

\begin{figure}[h]
\centering
\includegraphics[width=0.65\textwidth]{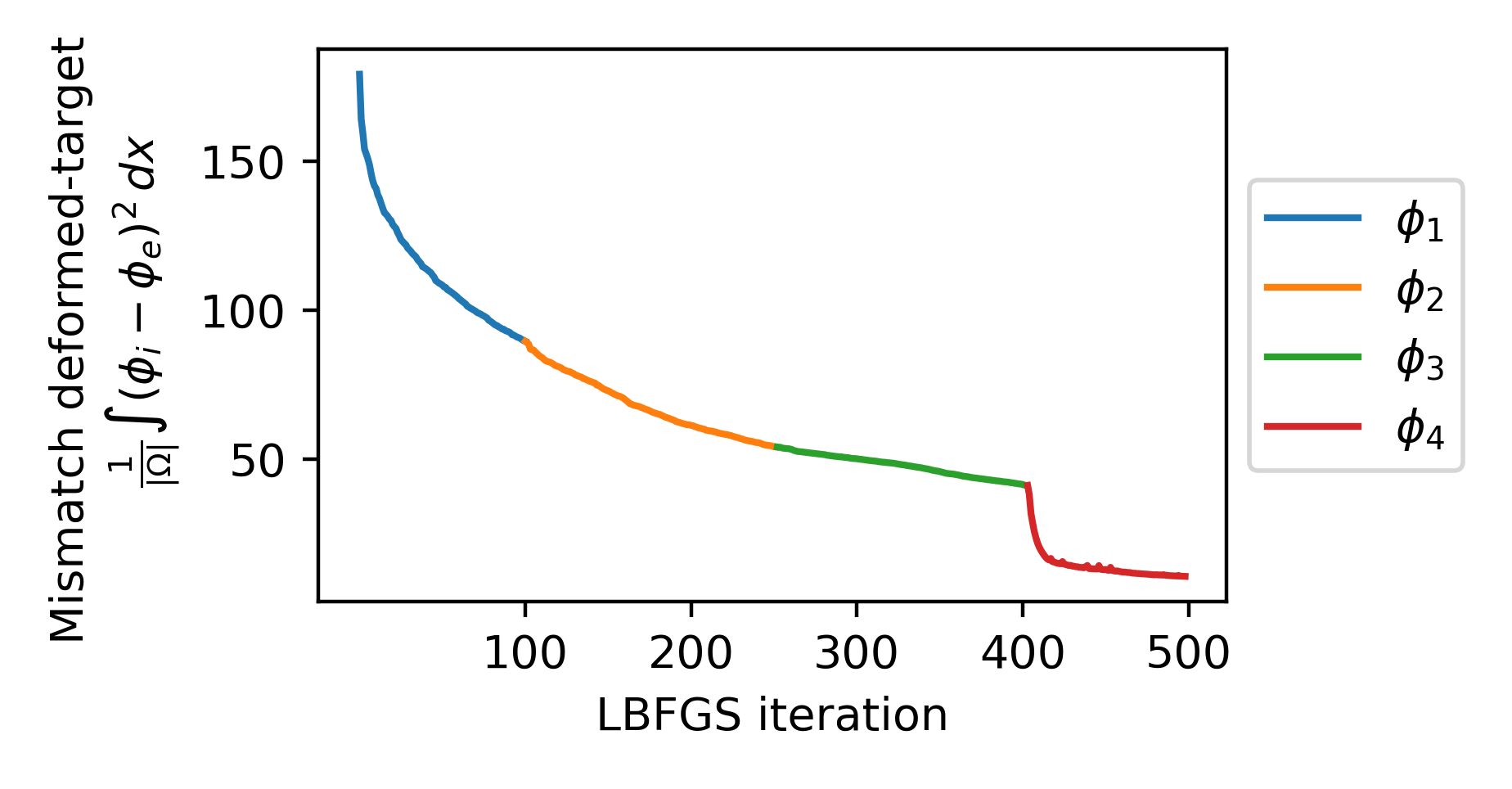}
\caption{
Reduction of the $L^2$-norm between deformed image and target during optimization of the velocity field deformations $\mathcal{S}_i$, $i=1,2,3,4$, with the L-BFGS-B algorithm.
$\phi_1,\phi_2,\phi_3$ are obtained with $\alpha=0, \beta=1$ and $\phi_4$ is obtained with $\alpha=0.5, \beta=0.5$.
\label{fig:optimization}
}
\end{figure}

In order to quantify the difference between our registration approach and FreeSurfer cvs--register, we apply the determined registration mapping to \texttt{norm.mgz} (another output of the \texttt{recon--all} command in FreeSurfer; \texttt{brain.mgz} is obtained by applying a normalization to \texttt{norm.mgz}) 
% see https://www.mail-archive.com/freesurfer@nmr.mgh.harvard.edu/msg51457.html
for Abby since this is the image that cvs--register works with. In the following, we denote the \texttt{norm.mgz} images of Abby and Ernie by $\hat \phi_a$ and $\hat \phi_e$, respectively.
Applying the transformation to a different input image ($\hat \phi_a$ instead of $\phi_a$ that we used for the registration) yields to numerical artefacts in this case, and we observe that around 600 voxels have unreasonable high intensity values $> 1000$ after registration. Hence, we quantify the difference between the registered image and \texttt{norm.mgz} from target Ernie using Tukey's biweight function \cite{tukey1960survey} given by
\begin{align*}
    \rho(x) = \begin{cases} \frac{c^2}{2} \left( 1- \left( 1 - \frac{x^2}{c^2}\right)^3 \right) \quad &\text{if } |x| \leq c,  \\
    \frac{c^2}{2} \quad &\text{otherwise.}
    \end{cases} 
\end{align*}
for different values of $c$ and report the result in Table \ref{tab:errornorm}.
We note that $c=255$ is the maximum voxel intensity value in \texttt{norm.mgz} for target Ernie.
We find that our registration based on four velocity fields yields a comparable Tukey difference as compared to FreeSurfer cvs--register for all parameters $c$. 
The three velocity field transform achieves a lower accuracy in the Tukey norm. Here, the transform obtained by (a hyperparameter tuned) CLAIRE \cite{mang2019claire} yields better results since it introduces less numerical artefacts. Note that hyperparameter tuning is not feasible with our current implementation. 
Fig. \ref{fig:fs-comparison} shows the registration error obtained with the nonlinear registration provided by FreeSurfer cvs--register \cite{postelnicu2008combined} (Surface and Volume based registration), and CLAIRE and our approaches applied to $\hat \phi_a$.
%Table \ref{tab:errornorm} and the slice shown in Fig. \ref{fig:fs-comparison} suggests that our algorithm yields registration with comparable accuracy to the FreeSurfer algorithm. 

\begin{figure}[ht]
\centering
\begin{subfigure}{0.32\textwidth}
    \includegraphics[width=\textwidth]{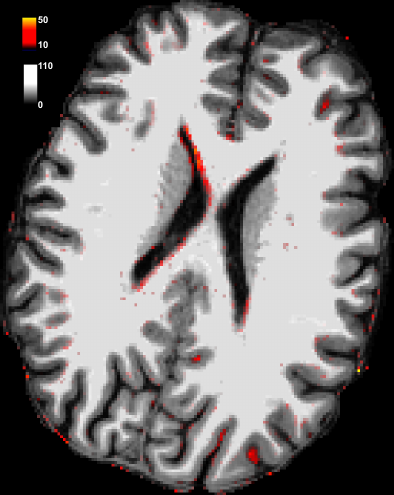}
    \caption{four step registration $\mathcal S_4 \circ \mathcal S_3 \circ \mathcal S_2 \circ \mathcal S_1 (\tilde \phi_a \circ \mathcal A^{-1}) - \tilde \phi_e$}
    \label{fig:first}
\end{subfigure}
\hfill
\begin{subfigure}{0.32\textwidth}
    \includegraphics[width=\textwidth]{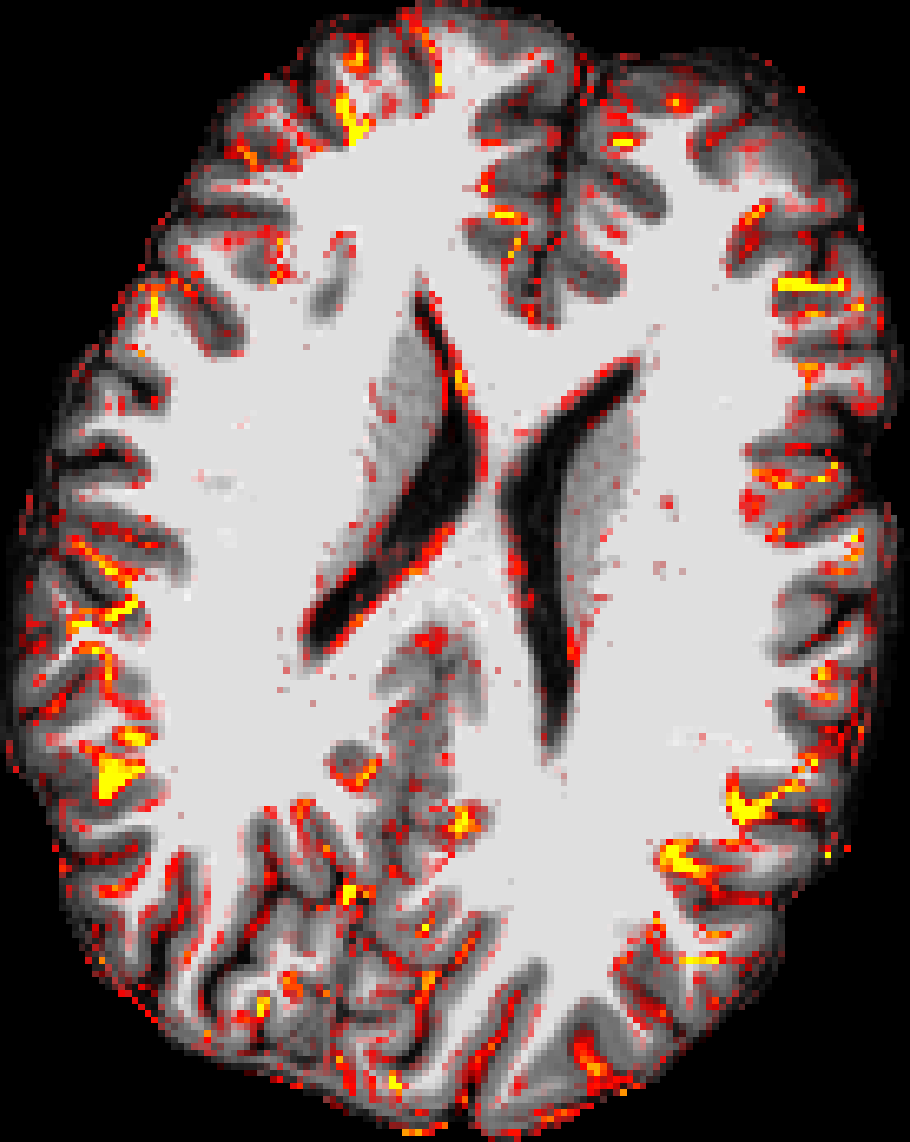}
    \caption{three step registration $\mathcal S_3 \circ \mathcal S_2 \circ \mathcal S_1 (\tilde \phi_a \circ \mathcal A^{-1})- \tilde \phi_e$}
    \label{fig:second}
\end{subfigure}
\hfill
\begin{subfigure}{0.32\textwidth}
    \includegraphics[width=\textwidth]{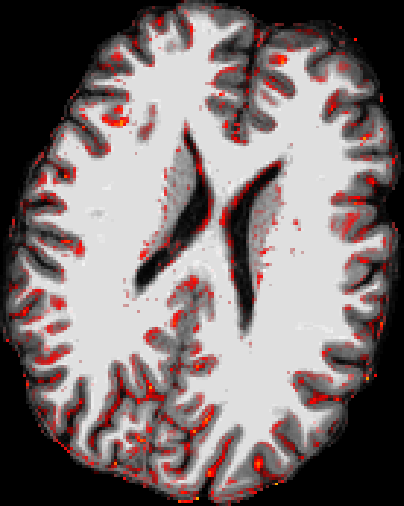}
    \caption{registration with CLAIRE \\ \ }
    \label{fig:third}
\end{subfigure}

\caption{
        The figure compares the results of our registration algorithm with three and four steps as well as CLAIRE \cite{mang2019claire}. 
        Background MR images: Slices from target image \enquote{Ernie} $\tilde \phi_e$ (\texttt{brain.mgz}, cf. Fig. \ref{fig::preprocessing}). 
        The heatmap shows the absolute difference between target $\tilde \phi_e$ and images registered with different procedures based on \enquote{Abby} $\tilde \phi_a$ (\texttt{brain.mgz}, cf. Fig. \ref{fig::preprocessing}).
}
        \label{fig:fs-comp-brain}

\end{figure}

\begin{figure}[ht]
\centering
\begin{subfigure}{0.24\textwidth}
    \includegraphics[width=\textwidth]{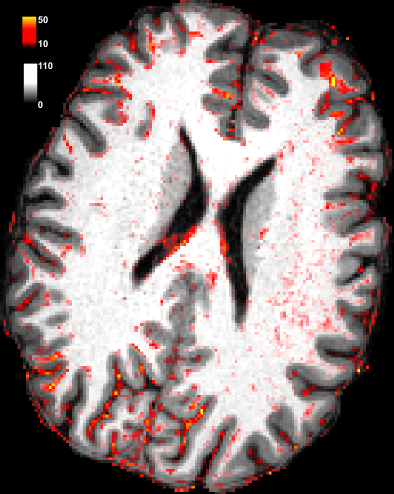}
    \caption{difference of FreeSurfer CVSregister (based on $\hat \phi_a$) and $\hat \phi_e$}
    \label{fig:norm_first}
\end{subfigure}
\hfill
\begin{subfigure}{0.24\textwidth}
    \includegraphics[width=\textwidth]{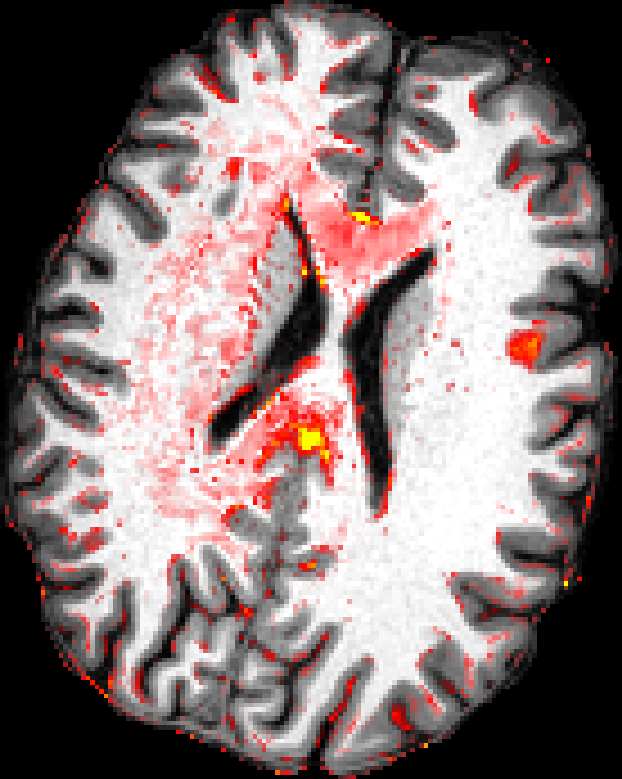}
    \caption{four step registration $\mathcal S_4 \circ \mathcal S_3 \circ \mathcal S_2 \circ \mathcal S_1 (\hat \phi_a \circ \mathcal A^{-1}) - \hat \phi_e$ \\}
    \label{fig:norm_second}
\end{subfigure}
\hfill
\begin{subfigure}{0.24\textwidth}
    \includegraphics[width=\textwidth]{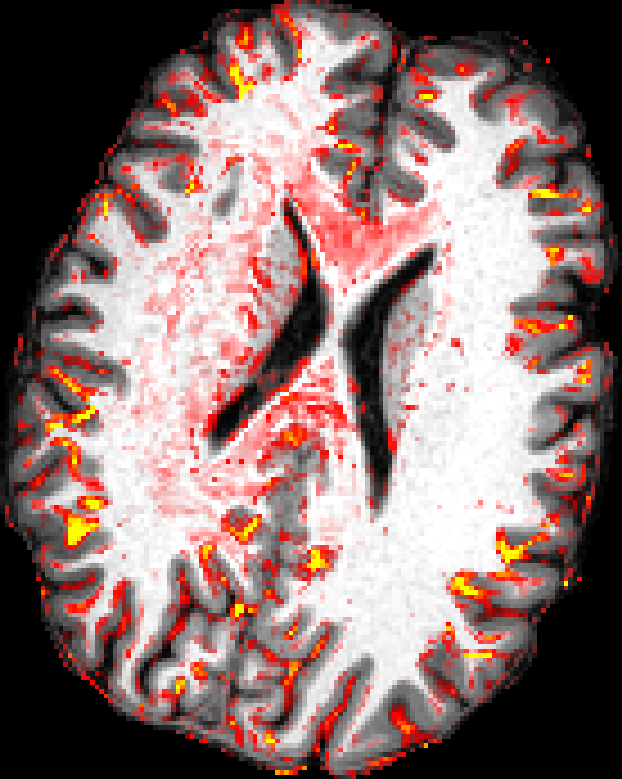}
    \caption{three step registration $\mathcal S_3 \circ \mathcal S_2 \circ \mathcal S_1 (\hat \phi_a \circ \mathcal A^{-1})- \hat \phi_e$ \\}
    \label{fig:norm_third}
\end{subfigure}
\hfill
\begin{subfigure}{0.24\textwidth}
    \includegraphics[width=\textwidth]{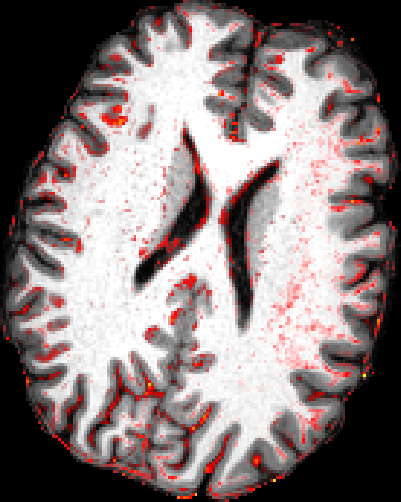}
    \caption{difference of CLAIRE (based on $\hat \phi_a$) and $\hat \phi_e$ \\}
    \label{fig:norm_forth}
\end{subfigure}
\caption{
        The figure compares the results of our registration algorithm to results obtained with FreeSurfer CVSregister \cite{postelnicu2008combined} as well as CLAIRE  \cite{mang2019claire}. 
        Background MR images: Slices from target image \enquote{Ernie} $\hat \phi_e$ (\texttt{norm.mgz}). 
        The heatmap shows the absolute difference between target $\hat \phi_e$ and images registered with different procedures based on \enquote{Abby} $\hat \phi_a$ (\texttt{norm.mgz}).
}
        \label{fig:fs-comparison}

\end{figure}

% number of outliers 598 (voxels with value > 1000)
% l2-difference between target and 3-velocity transform
% 22.523547124729042
% l2-difference between target and 4-velocity transform
% 1.2176655817445504e+22
% l2-difference between target and FreeSurfer cvs-transform
% 13.816458344459534
% Tukey difference between target and 3-velocity transform
% 32.58365349473474
% Tukey  difference between target and 4-velocity transform
% 20.24780330109198
% Tukey  difference between target and FreeSurfer cvs-transform
% 20.10083710079681

\begin{table}[]
\centering
\caption{Tukey error norm between registered image and norm.mgz for target Ernie for different values $c$ \label{tab:errornorm}}
\begin{tabular}{c|cccc}
$c$   & 3--velocity transform & 4--velocity transform & FreeSurfer cvs--register& CLAIRE \\
50  & 21.10      & 14.40      & 14.43  & 12.78 \\
110 & 28.85      & 17.65      & 18.23  & 15.30 \\
255 & 32.58      & 20.25      & 20.10  & 16.57 \\
\end{tabular}
\end{table}

\section{Mesh transformation}
\label{sec:meshtrafo}
In the following, we describe how the vertex coordinates of the input mesh from \enquote{Abby} are deformed to obtain a new mesh for \enquote{Ernie}, see Fig. \ref{fig::meshtransformationpipeline}.

\begin{figure}[h]
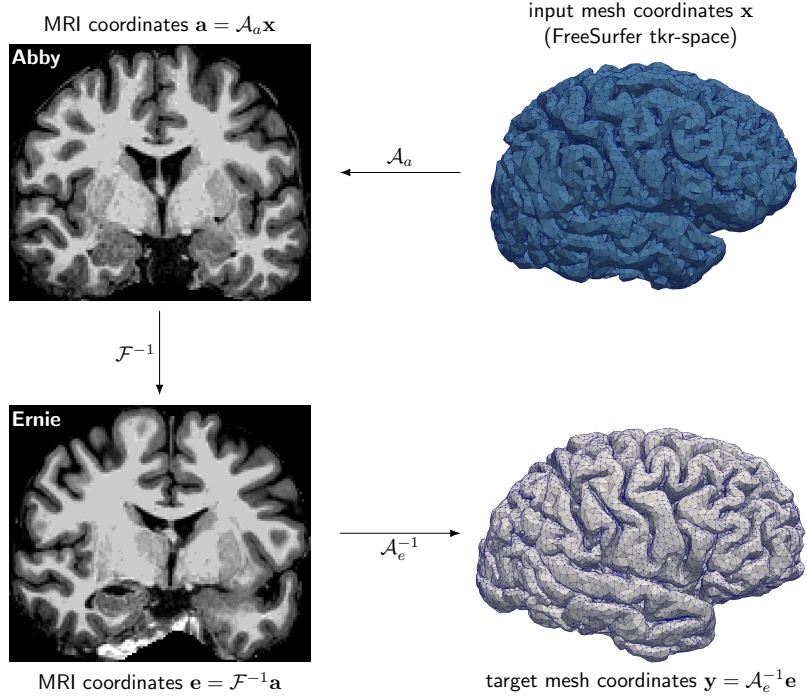

\centering
\scalebox{0.8}{
\begin{tikzpicture}[font=\sf]
%draw lines for arrows
\draw[-{Latex[]}] (0, 3.7) -- (0, 2.3);
\draw[{Latex[]-}] (3, 6) -- (5, 6);
\draw[{-Latex[]}] (3, 0) -- (5, 0);
%textboxes
\node[align=center] at (0, 8.45) { MRI coordinates $\mathbf a = \mathcal A_a \mathbf x$};
\node[align=center] at (0, -2.45) { MRI coordinates $\mathbf e = \mathcal F^{-1} \mathbf a$};
\node[align=center] at (8, 8.45) { input mesh coordinates $\mathbf x$\\ (FreeSurfer tkr-space)};
\node[align=center] at (8, -2.45) { target mesh coordinates $\mathbf y = \mathcal A_e^{-1} \mathbf e$};
\node[align=center] at (4, 6.25) {$\mathcal A_a $};
\node[align=center] at (4, -0.25) {$\mathcal A_e^{-1} $};
\node[align=center] at (-0.4, 3) {$\mathcal F^{-1} $};
% include figures
\node[inner sep=0pt] at (0.,0)    {\includegraphics[width=5cm, ]{figures/ernie_nomesh.png}};
\node[inner sep=0pt] at (0.,6)    {\includegraphics[width=5cm, trim={0.25cm 0.25cm 0.25cm 0.25cm}, clip]{figures/abby_nomesh.png}};
\node[inner sep=0pt] at (8.0, 0.1)    {\includegraphics[width=7cm, trim={15cm 0cm 15cm 0cm}, clip]{figures/ernie2.png}};
\node[inner sep=0pt] at (8.0, 6.1)    {\includegraphics[width=7cm, trim={15cm 0cm 15cm 0cm}, clip]{figures/abby2.png}};
%textboxes
\node[align=left, color=white] at (-2.05, 1.9) {\textbf{Ernie}};
\node[align=left, color=white] at (-2.05, 7.9) {\textbf{Abby}};
\end{tikzpicture}
}
\caption{Mesh transformation pipeline illustration the different coordinate systems involved.}
\label{fig::meshtransformationpipeline}
\end{figure}

\subsection{Mesh transformation pipeline}

We use SVMTK \cite{svmtk} as described in \cite{mardal2022mathematical} to create the reference mesh for \enquote{Abby}.
The meshing pipeline in SVMTK uses the surface files generated by the FreeSurfer \texttt{recon--all} command, which automatically generates cortical surface meshes and brain segmentations from MRI. 
For the numerical results presented in this paper, we assume that \texttt{recon--all} has been run successfully on subject $a$, and use the automatic parcellation output file \verb|aseg.mgz| as well as the left pial surface (the left hemisphere brain surface) file \verb|lh.pial| this command generates.
We distinguish between $\mathbf{x} \in \mathbb{R}^3$ for the coordinate of a vertex on a FreeSurfer surface and the corresponding coordinate $\mathbf{a}\in [0, 256]^3$ in the MRI voxel space as
\begin{align*}
    \mathbf{a} = \mathcal{A}_a\mathbf{x} = A_a \mathbf{x} + b_a.
\end{align*}
Here, $A_a \in \mathbb{R}^{3\times 3}$, $b_a\in \mathbb{R}^{3}$, parametrize the affine transform $ \mathcal{A}_a$ between coordinates defined in FreeSurfer-tkr space and voxel space, cf., e.g., \cite[Section 4.4]{mardal2022mathematical} for details on the coordinate systems involved in the mesh generation.
We remark that his transformation is subject--specific and is can be read from the files produced by performing surface generation using the FreeSurfer \texttt{recon--all} function.

We define $\mathcal F$ as the mapping that registers the input image to the target, i.e.,
\begin{align*}
 \mathcal F := \tau \circ \mathcal A
\end{align*}
with
\begin{align*}
 \tau := \mathcal{V}_N \circ \dots \circ \mathcal{V}_1 ,
\end{align*}
where $N \in \mathbb N$, $\mathcal A$ denotes the affine registration of the input and target image, $v_i$ denotes the optimized velocities obtained in section \ref{subsubsec:imagereg}, and $\mathcal V_i$ is defined by
 \begin{align*}
    {\mathcal V}_i : \Omega \to \Omega, \quad x \mapsto X(x, T),
 \end{align*}
 with $X$ given as the solution of
\begin{align*}
\begin{split}
\partial_t X + \nabla X^\top v_i = 0 \quad &\text{on }Q_T, \\
X(0) = x \quad & \text{in } \Omega.
\end{split}
\end{align*}
Moreover, we relate
\begin{align*}
\mathbf{e} = \mathcal F^{-1} \mathbf{a} = (\mathcal{A}^{-1} \circ \mathcal V_1^{-1} \circ \dots  \circ \mathcal V_N^{-1}) \mathbf{a}
\end{align*}
and compute $\mathcal F^{-1}$ by solving the CG1 discretized transport equation with the negative optimized velocity fields and the coordinates as initial condition.

Finally, in order to compare the meshes generated by our approach to surfaces generated by FreeSurfer \texttt{recon--all}, we transform the deformed mesh coordinates $\mathbf{e}$ into the FreeSurfer surface coordinate space by
\begin{align*}
    \mathbf{y} = \mathcal{A}_e^{-1} \mathbf{e},
\end{align*}
where $ \mathcal{A}_e$ is the affine transform between voxel coordinates and FreeSurfer surface coordinates for the target image.

\subsection{Numerical results} \label{subsubsec:meshreg}

Using the registration deformation obtained above, we test the deformation of two different meshes, one for the ventricular system and one for the left hemisphere. 
The ventricular system is chosen because this is an example, where manual mesh processing is required and we want to demonstrate that our registration can be used to create a new ventricular system mesh for the target without manual corrections.
The left hemisphere mesh is chosen to show the limitations of our algorithm in the current formulation.

We create a ventricular system mesh for the template \enquote{Abby} by utilizing the automatic segmentation image \verb|aseg.mgz| provided by FreeSurfer and mark the cerebral aqueduct manually (cf. Fig. \ref{fig:abby-aq}A). 
Using the processing steps described in \cite{mardal2022mathematical}, we create a volume mesh for the ventricular system (cf. Fig. \ref{fig:abby-aq}C).
The mesh for the left hemisphere is generated using the post--processing steps in \cite{mardal2022mathematical} from the surface \texttt{lh.pial} generated by FreeSurfer's \texttt{recon--all} command. 

First, we apply the transformations $\mathcal{A}$ (affine pre--registration), $\mathcal V_1\circ \mathcal{A}$ (affine and one velocity field deformation) as well as $\mathcal V_3 \circ \mathcal V_2 \circ \mathcal V_1 \circ \mathcal{A}$ (affine and three velocity field deformations) to the ventricular system mesh and compare the boundaries of these meshes in Fig. \ref{fig:abby-ventricles}.
It can be seen that the affine registered mesh does not match the target \enquote{Ernie} at all (red contours in Fig. \ref{fig:abby-ventricles}). 
Transport with one velocity field gives visibly better agreement (orange contours in Fig. \ref{fig:abby-ventricles}).
To resolve finer details and obtain meshes that are accurate to within a few voxels, two more velocity fields are needed (green contours and blue arrows in Fig. \ref{fig:abby-ventricles}).

Figure \ref{fig:abby-aq}D shows the deformed mesh. It can be seen that the mesh transformation with $\mathcal V_3\circ\mathcal V_2\circ\mathcal V_1\circ \mathcal{A}$ yields a mesh with similar smooth surface as the input mesh.  
Hence, one does not need to manually label the cerebral aqueduct, which is missing in the FreeSurfer automatic segmentation of \enquote{Ernie} (Fig. \ref{fig:abby-aq}B).

\begin{figure}[h]
\centering
\scalebox{0.8}{
\begin{tikzpicture}[font=\sf]
% include figures
\node[inner sep=0pt] at (0.,0)    {\includegraphics[width=6cm, trim={5cm 0cm 6.5cm 0cm}, clip]{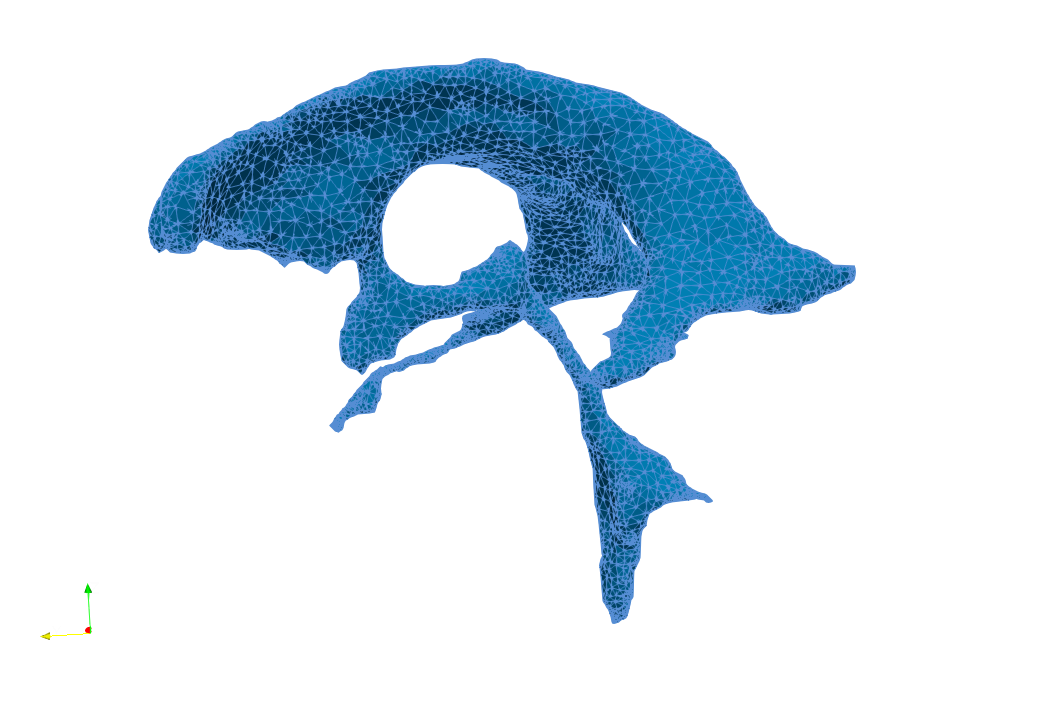}};
\node[inner sep=0pt] at (0.,7)    {\includegraphics[width=7cm, trim={0cm 0cm 4cm 0cm}, clip]{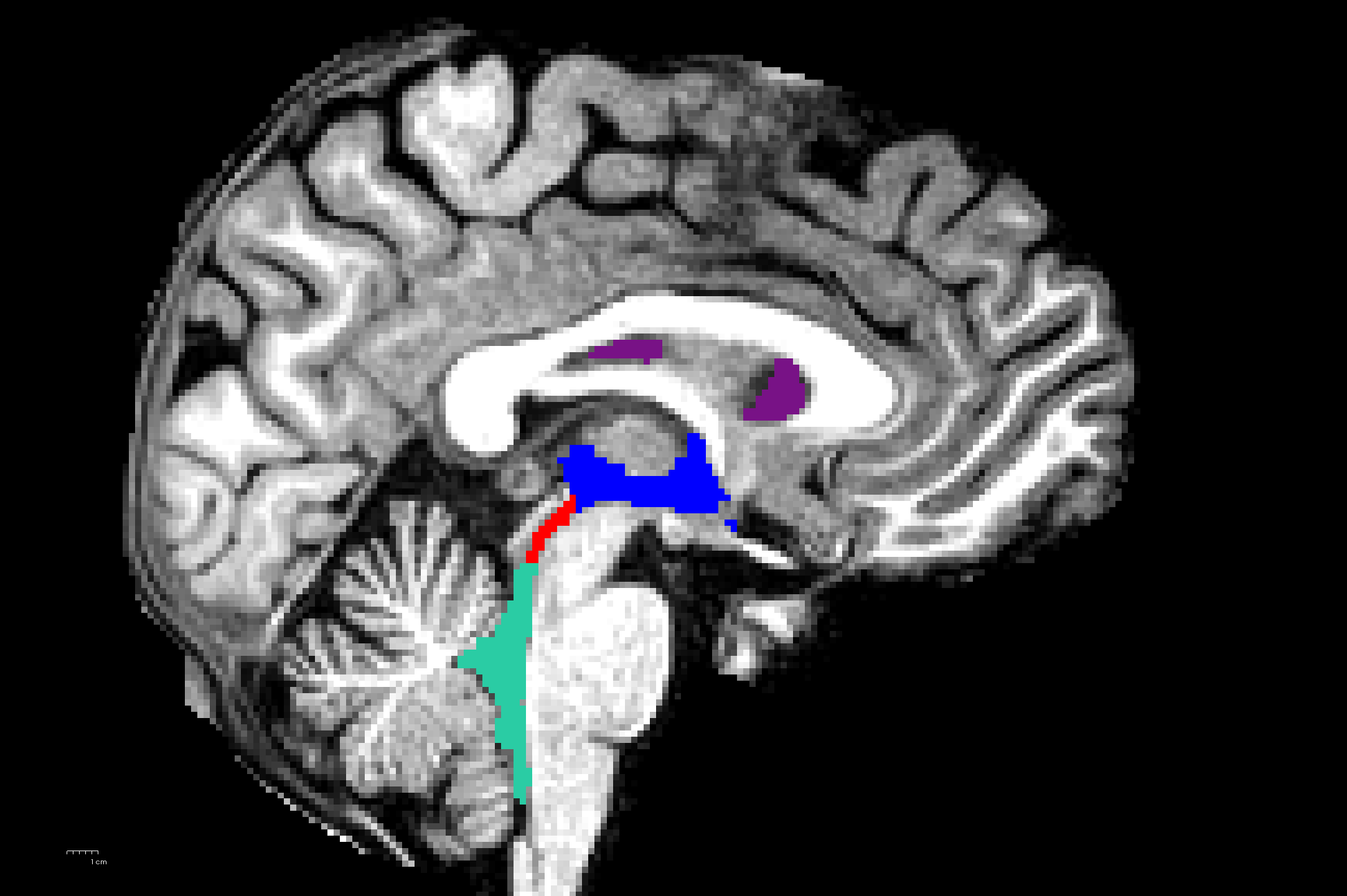}};
\node[inner sep=0pt] at (8.0, 0)    {\includegraphics[width=6cm, trim={4cm 0cm 5cm 0cm}, clip]{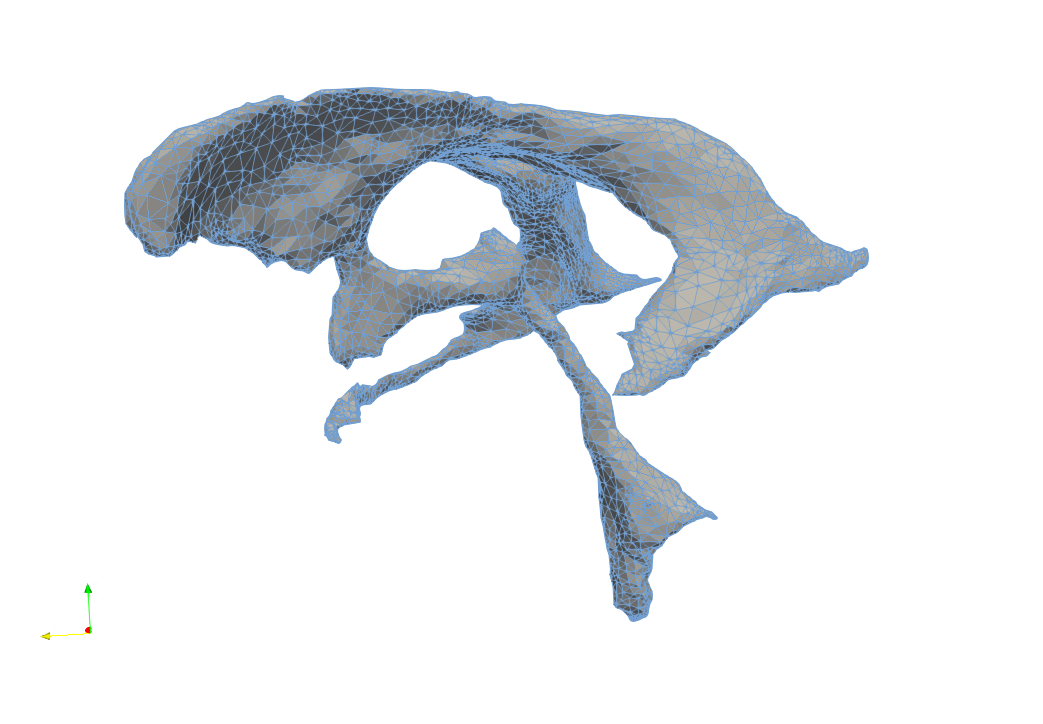}};
\node[inner sep=0pt] at (8.0, 7)    {\includegraphics[width=7cm, trim={4cm 3.5cm 5cm 0cm}, clip]{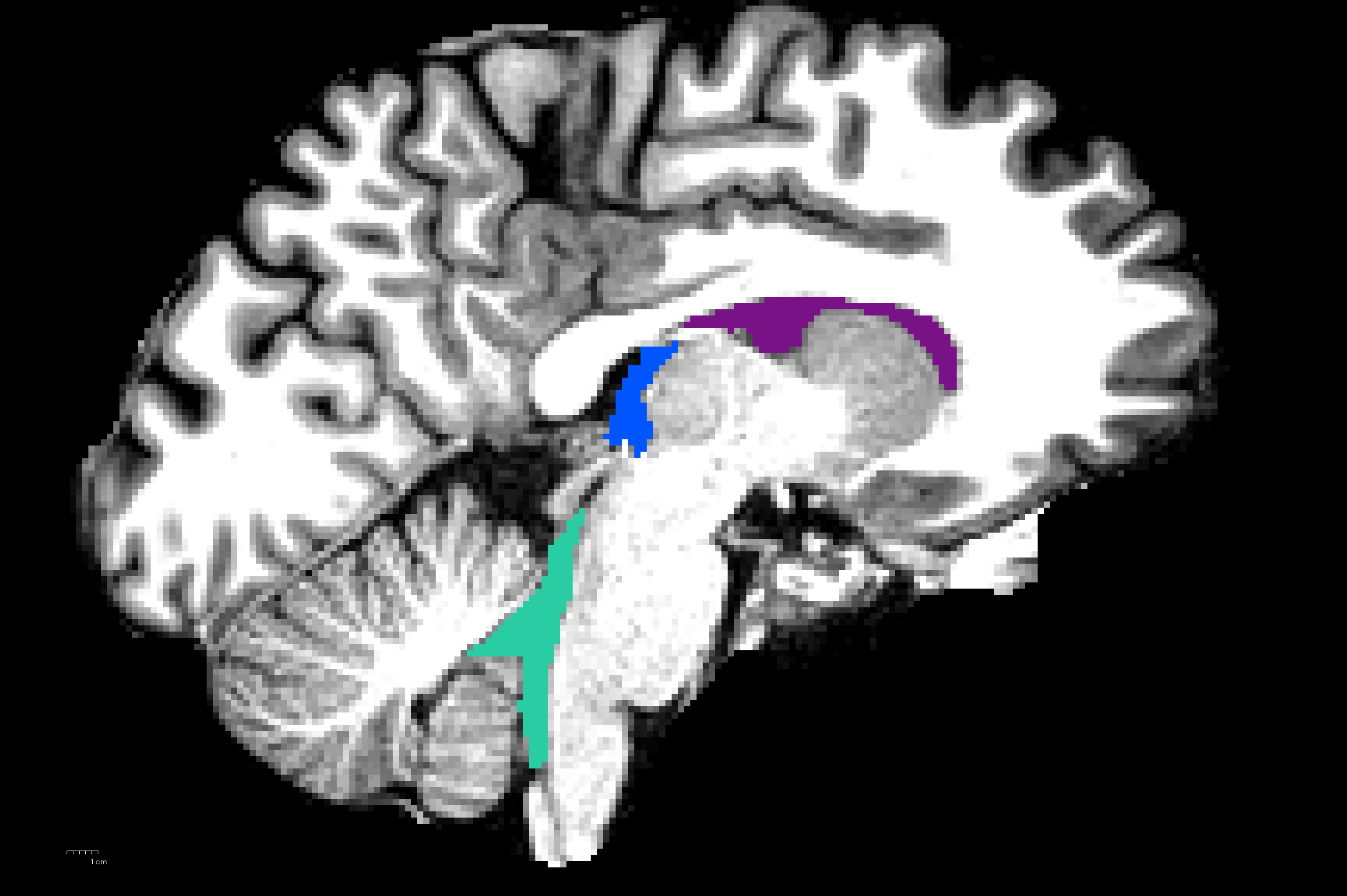}};
%draw lines for arrows
\draw[{-Latex[]}] (2, 0) -- (6, 0);
%textboxes
\node[align=center] at (0, -2.45) {input mesh coordinates $\mathbf x$};
\node[align=center] at (8, -2.45) {target mesh coordinates $\mathbf y$};
\node[align=center] at (4, -0.25) {$\mathcal A_e^{-1} \circ \mathcal F^{-1} \circ \mathcal A_a $};
%textboxes
\node[align=left, color=white] at (-3.25, 9.2) {\textbf{A}};
\node[align=left, color=white] at (4.75, 9.2) {\textbf{B}};
\node[align=left] at (-3.25, 2.2) {\textbf{C}};
\node[align=left] at (4.75, 2.2) {\textbf{D}};
%manual work arrow
\draw[-{Latex[length=0.5cm, width=0.5cm]}, line width = 0.2cm, gray!90] (0, 4.25) -- (0, 2.75);
\node[align=center] at (-1.5, 3.5) { FreeSurfer \& \\ SVMTK \& \\ manual work};
\end{tikzpicture}
}
\caption{
A-B: Sagittal slice of \enquote{Abby} and \enquote{Ernie} with the FreeSurfer automatic segmentation for fourth ventricle (cyan), third ventricle (blue) and lateral ventricles (violet). 
After manually marking the cerebral aqueduct in \enquote{Abby} (displayed in red in A), we generate a mesh for the ventricular system of \enquote{Abby} (C).
The same nonlinear registration $\mathcal F$ that registers the MR image of \enquote{Abby} to \enquote{Ernie} can be used to obtain a mesh for the ventricular system of \enquote{Ernie} (D).
\label{fig:abby-aq}
}
\end{figure}

\begin{figure}[h]
\centering
\includegraphics[width=0.32\textwidth, trim={7cm 0cm 2cm 0cm}, clip]{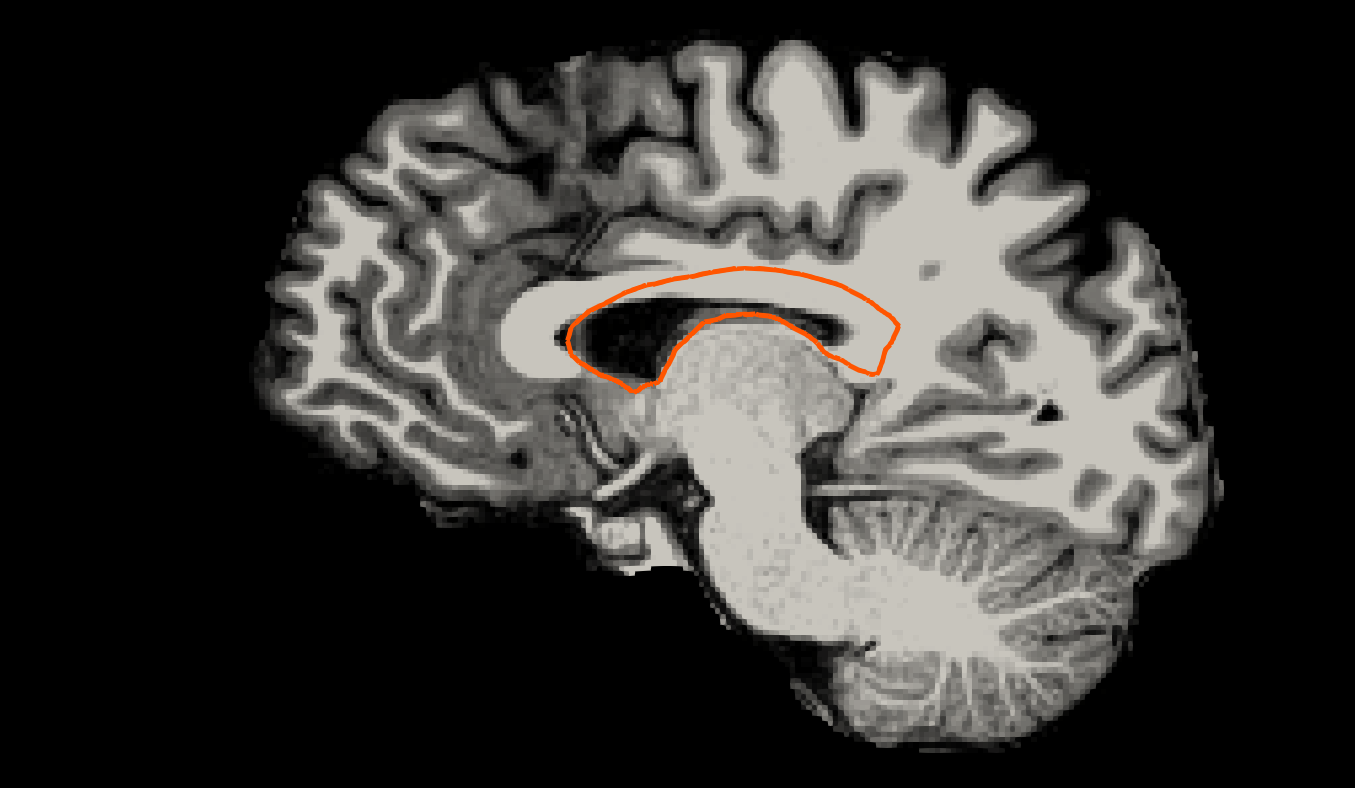}
\includegraphics[width=0.32\textwidth, trim={7cm 0cm 2cm 0cm}, clip]{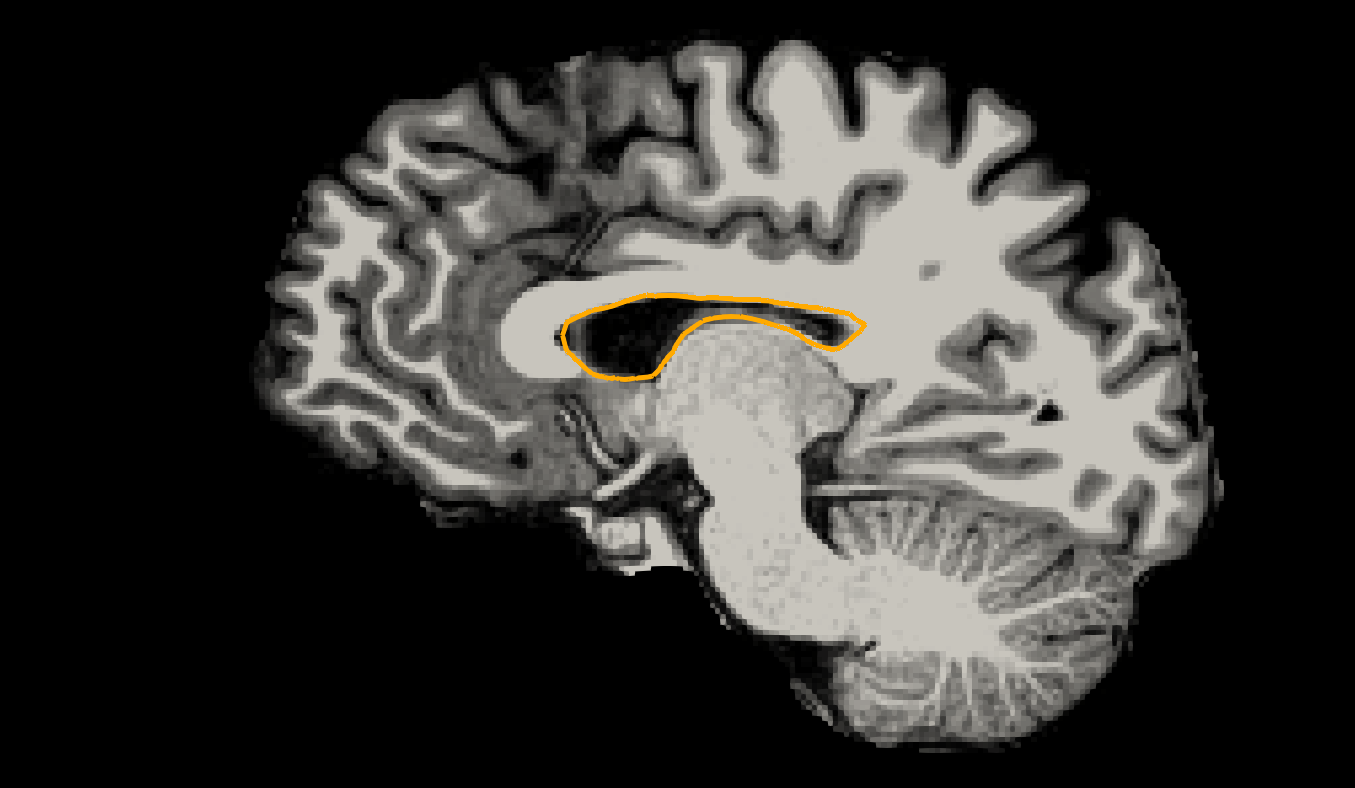}
\includegraphics[width=0.32\textwidth, trim={7cm 0cm 2cm 0cm}, clip]{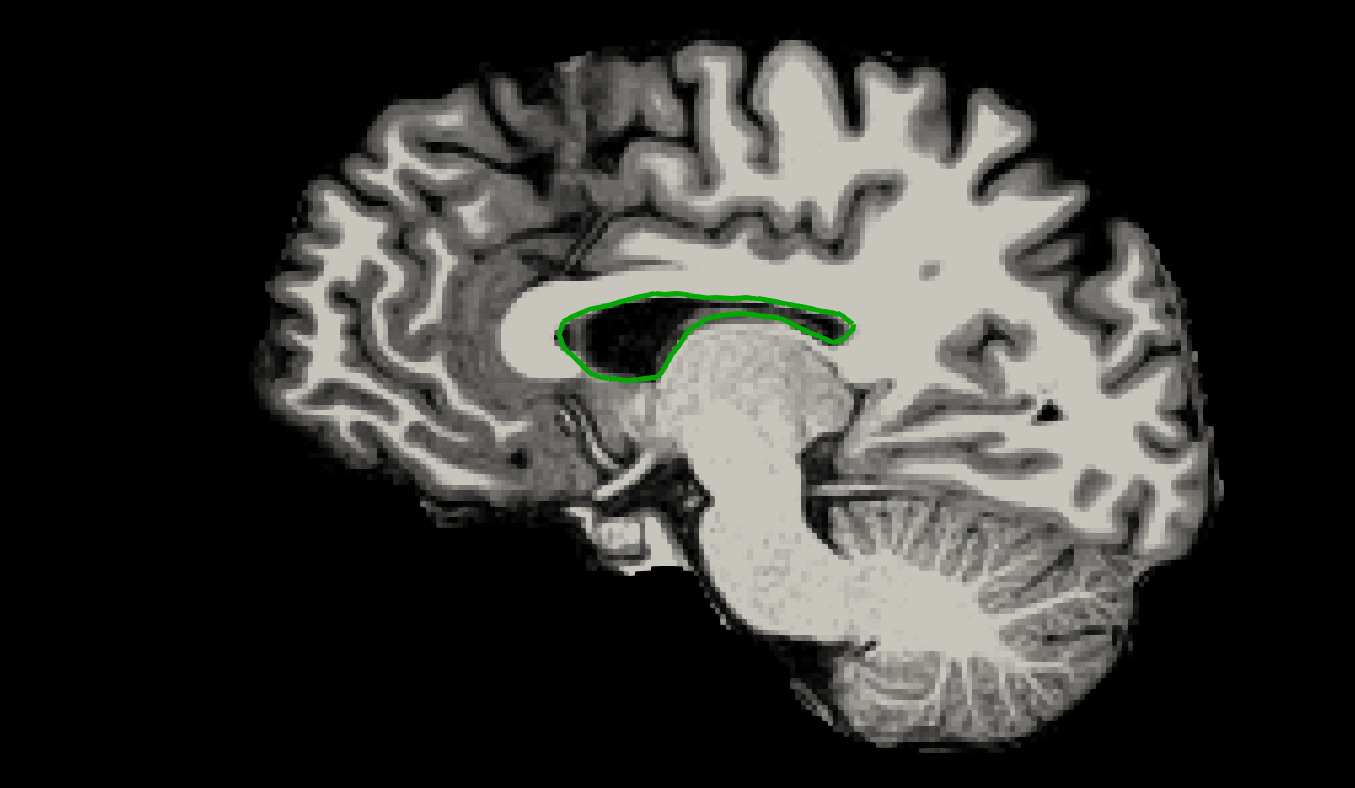} \\
\includegraphics[width=0.32\textwidth, trim={6.5cm 2cm 4.5cm 0cm}, clip]{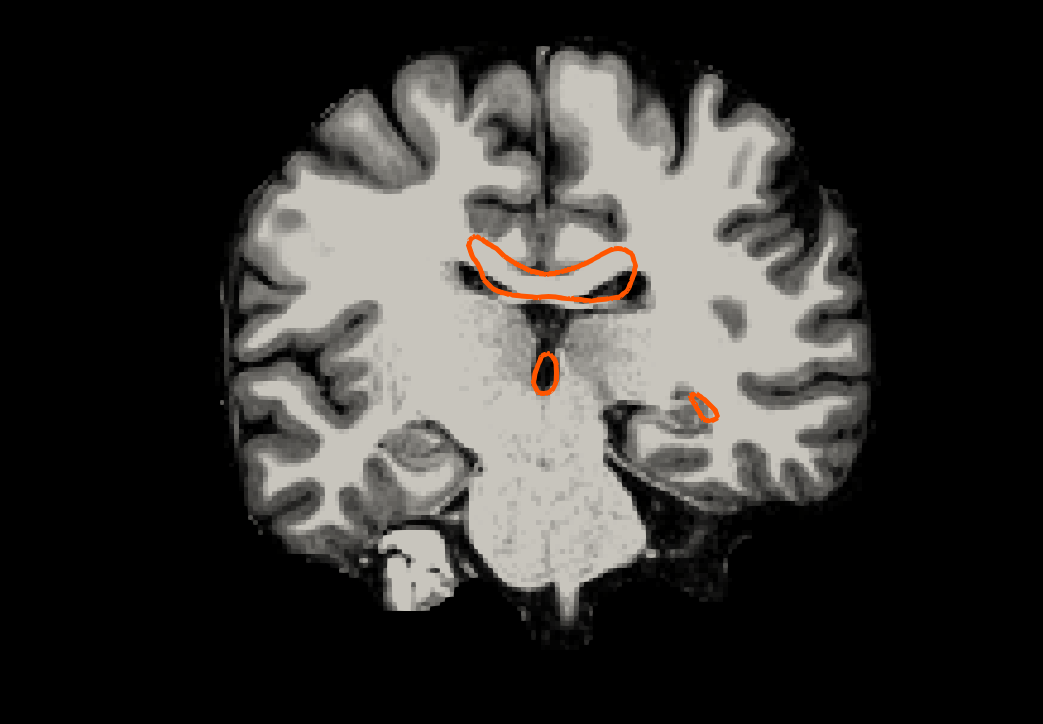}
\includegraphics[width=0.32\textwidth, trim={6.5cm 2cm 4.5cm 0cm}, clip]{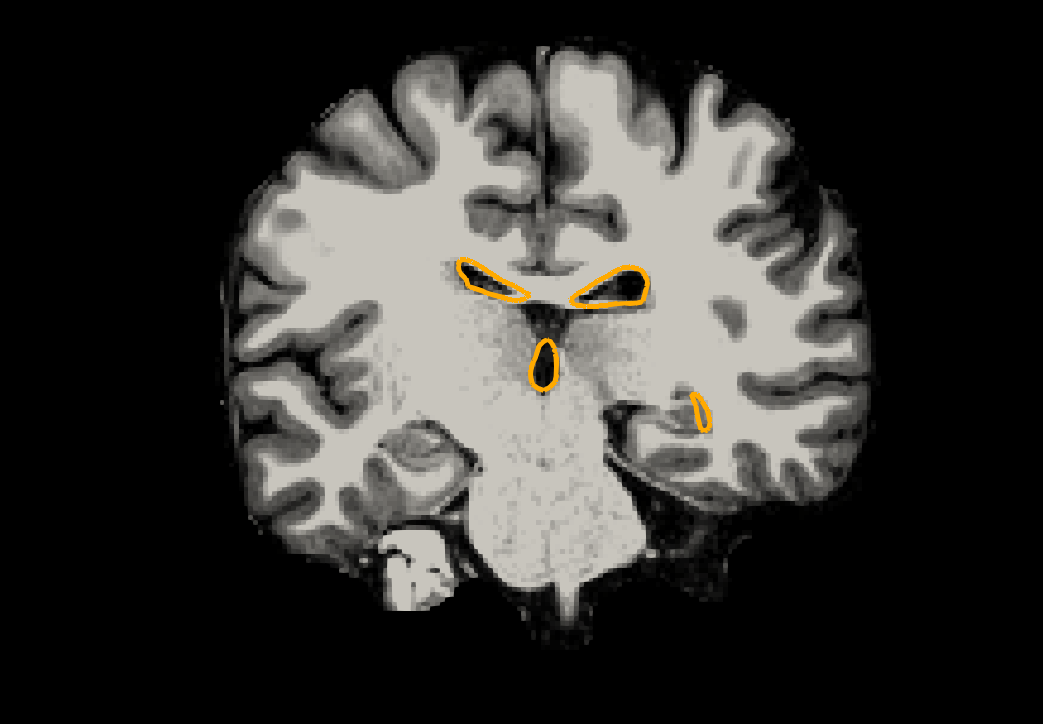}
\includegraphics[width=0.32\textwidth, trim={6.5cm 2cm 4.5cm 0cm}, clip]{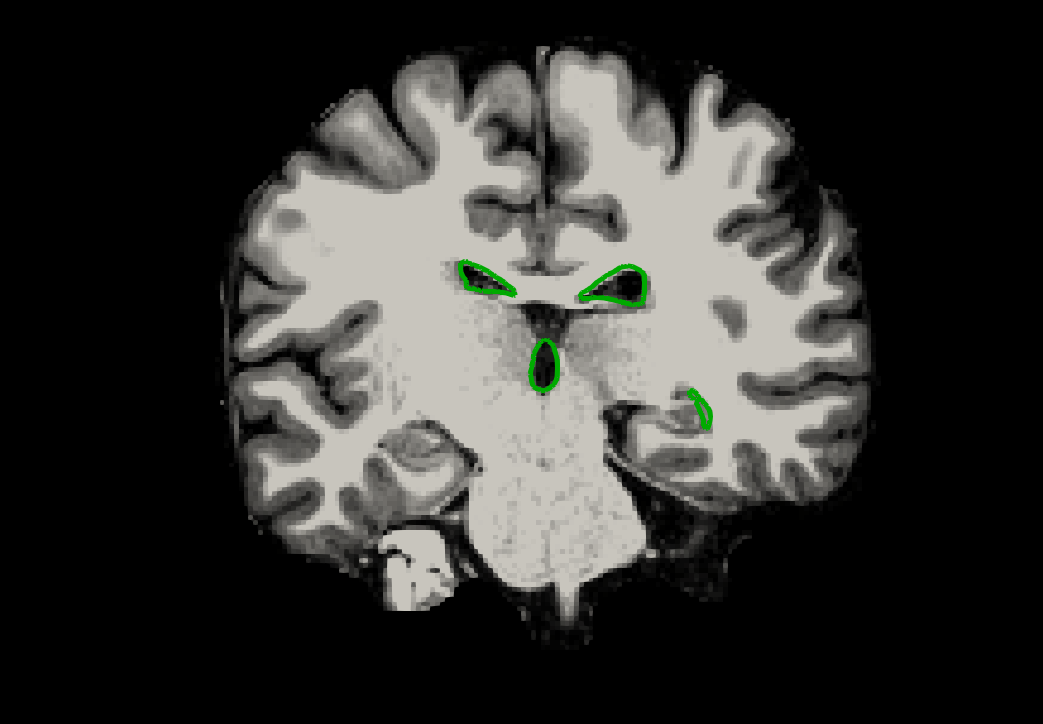} \\
\includegraphics[width=0.32\textwidth, trim={9cm 2cm 9cm 0cm}, clip]{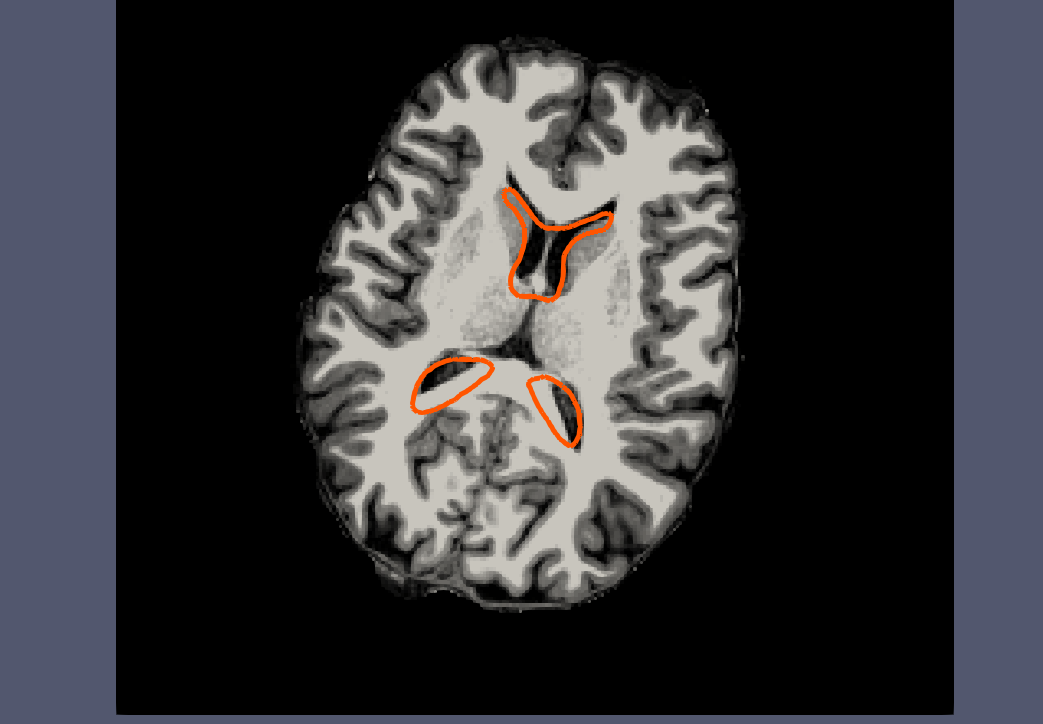}
\includegraphics[width=0.32\textwidth, trim={9cm 2cm 9cm 0cm}, clip]{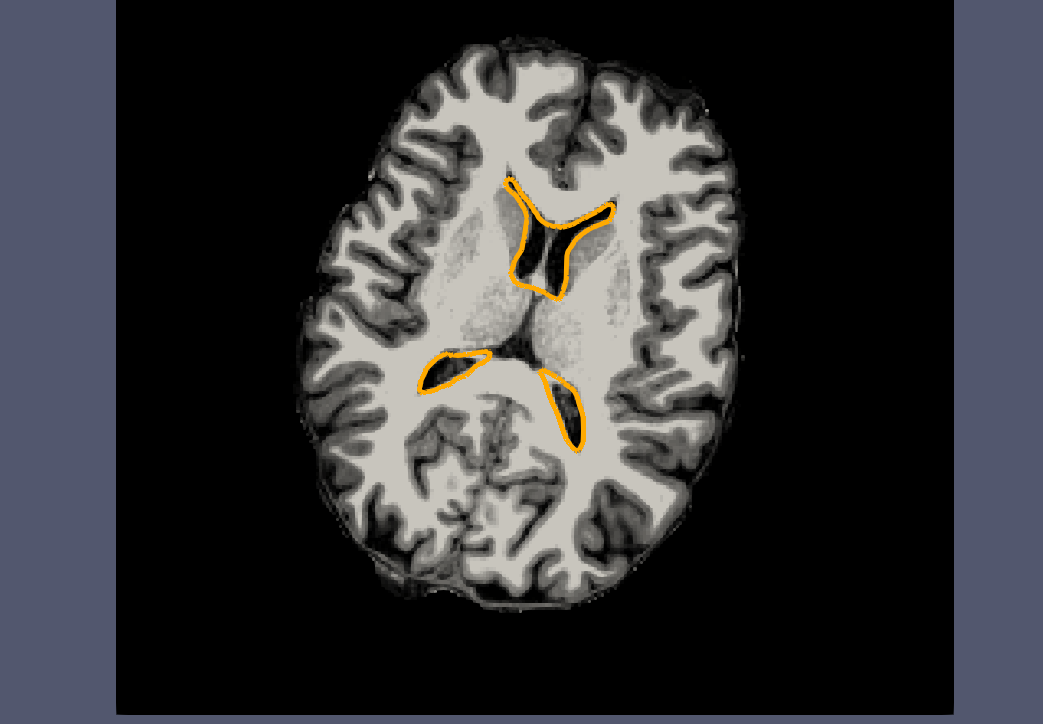}
\includegraphics[width=0.32\textwidth, trim={9cm 2cm 9cm 0cm}, clip]{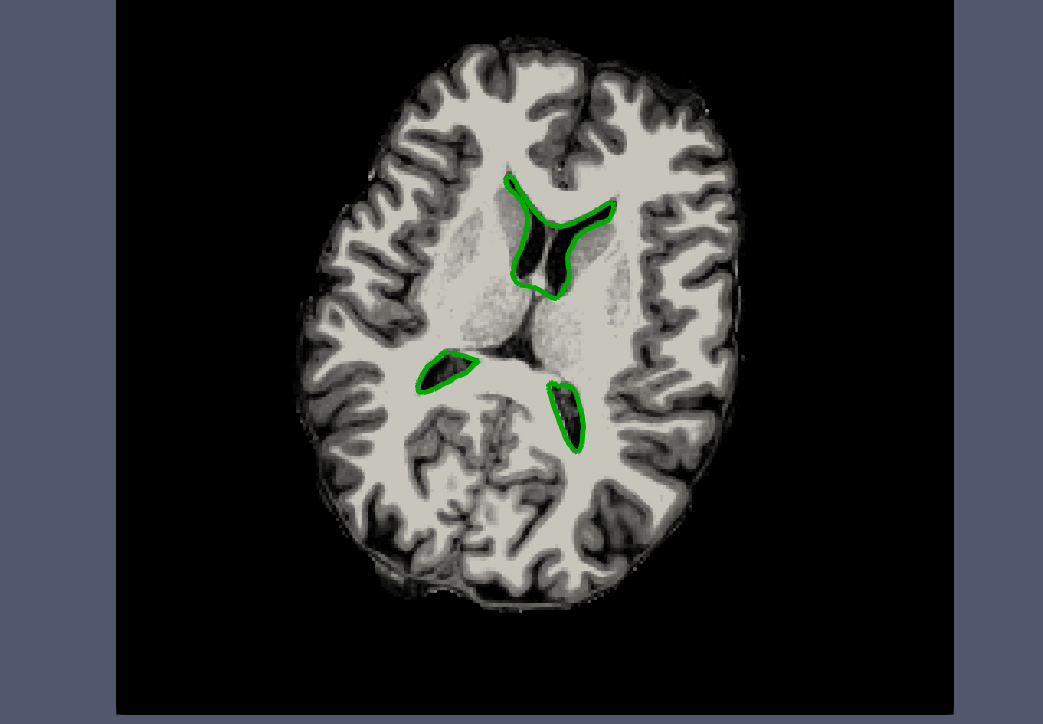}
\caption{
Background MR images: Slices from target image \enquote{Ernie}. Colored lines: surfaces of deformed ventricle mesh from input \enquote{Abby}.
Orange (left column): Mesh deformed with affine deformation $\mathcal{A}$, Yellow (mid column): Affine and one velocity field deformation $\mathcal V_1 \circ \mathcal{A}$, Green (right column): Affine and three velocity field deformations $\mathcal V_3 \circ \mathcal V_2 \circ \mathcal V_1 \circ \mathcal{A}$. 
\label{fig:abby-ventricles}
}
\end{figure}

We also test the application of all four velocity field registrations ($\mathcal V_4 \circ \mathcal V_3\circ\mathcal V_2\circ\mathcal V_1\circ \mathcal{A}$) to the ventricular system mesh.
A slice of the surface of the resulting deformed mesh is shown in blue in Fig. \ref{fig:mesh-comparison}.
It can be seen that the additional deformation does not improve upon the match to the ventricle. 
This was to be expected, since Fig. \ref{fig:fs-comparison} shows that three velocity field deformations already register the ventricular system well to the target image.
However, it can be seen that the quality of the deformed mesh decreases in the sense that the mesh surface becomes jagged. 
This is due to the fact that $\mathcal V_4$ was obtained with $\alpha=\beta=1/2$.
The velocity pre--processing step \eqref{math:velocity-pre--processing} hence yields a velocity field with finer features than in $\mathcal V_3,\mathcal V_2,\mathcal V_1$ obtained with $\alpha=0,\beta=1$.
To mitigate this issue, a post--processing step (smoothening and isotropic remeshing as described in \cite{mardal2022mathematical}) can be applied to the mesh surface. 
This yields a mesh very similar to the mesh obtained with three velocity fields shown in green in Fig. \ref{fig:fs-comparison} (post--processed mesh not shown).

\begin{figure}[h]
\centering
\includegraphics[width=0.7\textwidth]{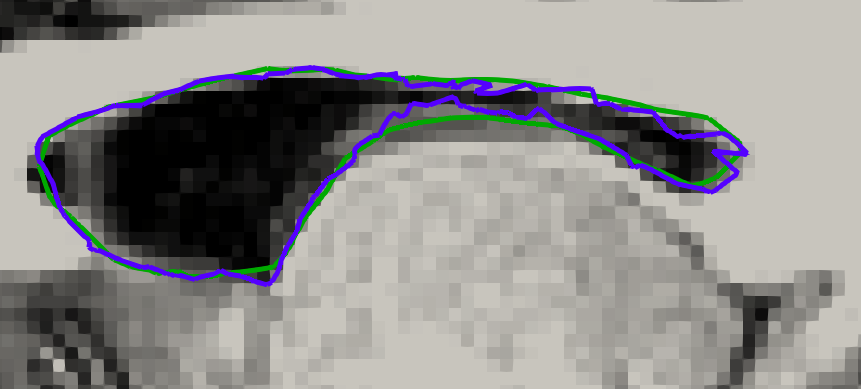}
\caption{
Surface of the deformed mesh obtained with three velocity fields (green) and an additional, less smooth velocity field (blue).
\label{fig:mesh-comparison}
}
\end{figure}

\section{Discussion and Outlook}
We presented a pipeline based image registration in order to obtain subject specific meshes. As a proof of concept, we focused on the ventricular system including the cerebral aqueduct. We are able to obtain a volume mesh that matches the ventricular system of the target subject from a manually crafted mesh matching the input subject. We also pointed out the limitations of the algorithm in its current formulation.
Adapting parameters required to obtain more accurate registration comes at the cost of introducing spikes when deforming the input mesh and instabilities when applying the transport equation to different initial data. Here, more sophisticated choices for the set of admissible velocity fields and hyperparameter tuning is expected to increase the performance of our algorithm.

Our pipeline consists of several sub-tasks, in which several heuristic choices occur (e.g. the choice of the normalization in the pre-processing and the parameterization of the velocity field in modelling the optimization problem). Moreover, our motivation of the upwind DG scheme points out weaknesses of the chosen numerical upwind DG scheme which also become apparent in the numerical results. Hence, improving our approach requires further improvements in the corresponding sub-tasks.

Our numerical implementation is based on the Python library FEniCS. 
This implementation does currently not make use of the full potential of the DG scheme to be parallelized more efficiently than, e.g., CG schemes. Hence, there is the need for a high performance implementation of the presented algorithm. This can, e.g., be realized by translating it into a neural network based approach based on, e.g., MeshGraphNets \cite{pfaff2020learning}.

\appendix

\section*{Acknowledgments}
This research has been supported by the Research Council of Norway under the grants 300305 and 320753 and the German Academic Exchange Service (DAAD) under grant 57570343 of the program ``Programm des projektbezogenen Personenaustauschs Norwegen 2021-2023''. This work has also been supported in part by the German Research Foundation (DFG) grant 314150341 of the priority program SPP1962. The computations presented in this work have been performed on the Experimental Infrastructure for Exploration of Exascale Computing (eX3), which is financially supported by the Research Council of Norway under contract 270053. The work was initiated during a workshop at the Simula cabin in Geilo, Norway. The authors wish to explicitly thank all of the respective institutions.

The authors also wish to thank Axel Kröner, Simon Funke, Rami Masri and Kathrin Völkner for their discussions and feedback on writing the grant proposal and manuscript, and during the workshop in Geilo, respectively.

\bibliographystyle{siamplain}
\bibliography{sample}

\end{document}